\newtheorem{thm}{Theorem}[section]
\newtheorem{cor}[thm]{Corollary}
\newtheorem{lema}[thm]{Lemma}
\newtheorem{prop}[thm]{Proposition}
\theoremstyle{definition}
\newtheorem{defn}[thm]{Definition}
\theoremstyle{remark}
\newtheorem{rem}[thm]{Remark}
\numberwithin{equation}{section}
\newcommand{\R}{\mathbb R}
\newcommand{\N}{\mathbb N}
\newcommand{\J}{\mathcal{J}}
\newcommand{\Xrad}{\mathcal{X}_{\rm rad}(B)}
\newcommand{\LL}{\mathcal{L}_{s,p,\beta}}
\newcommand{\ve}{\varepsilon}
\newcommand{\lam}{\lambda}
\newcommand{\cd}{\rightharpoonup}
\begin{document}

\title[Local--Nonlocal H\'enon problems]{On the mixed local--nonlocal H\'{e}non equation}

\author[A.M.\,Salort]{Ariel M. Salort}
\author[E.\,Vecchi]{Eugenio Vecchi}

\address[A.M.\, Salort]{Instituto del C\'alculo Luis A. Santal\'o (IMAS), CONICET, Departamento de Matem\'atica, FCEN - Universidad de Buenos Aires
\hfill\break \indent Ciudad Universitaria, Pabell\'on I (1428) Av. Cantilo s/n., Buenos Aires, Argentina.}
\email[A. Salort]{asalort@dm.uba.ar}
\urladdr{http://mate.dm.uba.ar/~asalort}

\address[E.\,Vecchi]{Dipartimento di Matematica
\newline\indent Politecnico di Milano \newline\indent
Via Bonardi 9, 20133 Milano, Italy}
\email{eugenio.vecchi@polimi.it}

\subjclass[2010]{35R11, 35B35}


\begin{abstract}
In this paper we consider a H\'{e}non-type equation driven by a nonlinear operator obtained as a combination of a local and nonlocal term. We prove existence and non-existence akin to the classical result by Ni, and a stability result as the fractional parameter $s \to 1$.
\end{abstract}

\maketitle
\section{Introduction}

Given $\beta\in [0,1]$, a fractional parameter $s\in (0,1)$ and $p>1$ we consider the following mixed local--nonlocal elliptic operator
\begin{equation}\label{eq:Operator}
\LL u := (1-\beta)(-\Delta_p)u + \beta(- \Delta_p)^s u.
\end{equation}
It is obvious that $\LL$ boils down to the usual $p-$Laplacian operator when $\beta=0$ and to the pure fractional $p-$Laplacian when $\beta=1$.
In the linear case $p=2$, this operator has a probabilistic interpretation consisting in describing a discontinuous process where the local part provides the continuous part, while the nonlocal one represents the jump process. We postpone to the end of the Introduction a brief account of the existing literature.

\medskip

In this paper we consider the following Dirichlet boundary value problem
\begin{equation} \label{prob}
\left\{\begin{array}{rll}
\LL u  =|x|^\alpha u^{q-1} & \text{in $B$} & \\
 u>0  & \text{in $B$} & \\
 u= 0 & \text{on $\partial B$} & \text{if $\beta =0$}\\
u  = 0 & \text{in $\R^n \setminus B$} & \text{if $\beta \in (0,1]$,}
\end{array}\right.
\end{equation} 
where $B$ stands for the unit ball in $\R^n$, $n\geq 2$,  $\alpha>0$, $s\in (0,1)$, $p>1$; we assume that $n>sp$ if $\beta=1$, while $n>p$ if $\beta \in [0,1)$, and $p<q<p^*_\beta$, where 
\begin{equation*}
  p^*_\beta := \left\{ \begin{array}{rl}
  \frac{np}{n-p} & \textrm{ if } \beta \in [0,1)\\
  \frac{np}{n-sp} & \textrm{ if } \beta =1.
  \end{array}\right.
\end{equation*}
Here we are primarily interested in proving existence and non-existence of weak solutions, which we define in a classical variational way:
we consider the functional $\J\colon X_{\beta,rad}(B) \to\R$ associated to \eqref{prob}
\begin{equation} \label{func.J}
\J(u)=\frac{1-\beta}{p} \|\nabla u\|^{p}_{L^{p}(B)}+\frac{\beta}{p} [u]_{s,p}^p- \frac{1}{q} \int_B |x|^\alpha (u^+)^{q} \,dx,
\end{equation}
defined on a suitable space $X_{\beta,rad}(B)$, see Section \ref{sec.functionSpace}, where $[u]_{s,p}$ stands for the Gagliardo seminorm 
\begin{equation}
[u]_{s,p}:=K(n,s)\, \left(\iint_{\mathbb{R}^{2n}}\dfrac{|u(x)-u(y)|^p}{|x-y|^{n+2s}}\, dx \, dy\right)^{1/p},
\end{equation}
where $K(n,s)$ is a positive constant, depending only on $n$ and $s$, such that $[u]_{s,p}^p\to \|\nabla u\|_p^p$ as $s\to 1$, see e.g. \cite{bbm}.\\
Clearly, $\J$ is class $C^1$ and its Frech\'et derivative is given by the expression
\begin{equation*}
\begin{aligned}
\langle  \J'(u),v\rangle &= (1-\beta)\int_{B}\|\nabla u\|^{p-2}\langle \nabla u, \nabla v \rangle \, dx \\
&+ \beta K(n,s) \iint_{\R^{2n}} \frac{|u(x)-u(y)|^{p-2}(u(x)-u(y))(v(x)-v(y))}{|x-y|^{n+sp}}\,dx \, dy \\
&- \int_{B} |x|^\alpha (u^+)^{q-1}v\,dx
\end{aligned}
\end{equation*}
for all $v\in X_{\beta,rad}(B)$. Therefore, any critical point of $\J$ is a weak solution of \eqref{prob}.

\medskip 

Obviously, when $\beta=0$ and $p=2$, problem \eqref{prob} boils down to the classical H\'enon equation introduced in \cite{Henon} to model spherically symmetric stellar clusters. The literature related to this famous equation is huge and encompasses several interesting lines of research in Nonlinear Analysis, including existence of solutions, nonexistence, multiplicity and finer qualitative properties of solutions.
Here we do not aim at providing a complete and fully detailed list of references, but rather we limit ourselves in mentioning the papers which are closely related to the   content of this note. Our main interest is to extend a classical result in \cite{N}, where Ni noticed that the presence of the term $|x|^\alpha$ was modifying the problem enough to increase the range of powers of $u$ for which a solution exists, in this way presenting a quite different scenario with respect to the case $\alpha =0$. Indeed, he was able to show that in there is a weak solution for $2<q<\tfrac{2n +2\alpha}{n-2} = 2^{\ast} + \tfrac{2\alpha}{n-2}$, so going beyond the classical nonexistence threshold $2^{\ast}$ related to the critical Sobolev embedding. Interestingly, due to the method for proving such existence, Ni was also able to get radiality of such a solution. Indeed, that solution is of Mountain Pass type, exploiting the Radial Lemma of Strauss and the compactness of radial Sobolev functions. This fact is noteworthy because the term $|x|^\alpha$ prevents from applying the symmetry results due to Gidas-Ni-Nirenberg already in the pure local case, and so even more in the mixed case, where similar qualitative results have been recently proved in \cite{BDVVAcc}. Actually, already in the pure local case, in \cite{SSW} it was proved that there is a sort of {\it critical threshold} for the parameter $\alpha$ beyond which there exist non-radial ground states. \\

Following the result of Ni, there have been several extension of it to different operators. Let us briefly recall them.
The case of the $p-$Laplacian was treated in \cite{Na} for $p>1$ while the purely linear nonlocal case (i.e., $\beta=1$ and $p=2$) was addresses in \cite{SW} , where it was proved that for $1<2s<n$ and $q<\frac{2n +2\alpha}{n-2s}$, there exists a positive weak solution for \eqref{prob}. The critical case $q=2^*_{s,\alpha}$ was studied in \cite{BQ}. To the best of our knowledge, the pure nonlocal and nonlinear case (i.e. $\beta =1$ and $p\neq 2$) has not be covered so far: this is the main reason why we choose the operator in \eqref{eq:Operator}, so to get existence and non-existence of solutions for both the nonlocal nonlinear case as well as mixed local-nonlocal combinations.

\medskip

Before stating our results, let us introduce a further threshold  quantity which will play a major role in the following:
\begin{equation}
p^*_{\beta,\alpha} := \left\{ \begin{array}{rl}
\frac{np + \alpha p}{n-p} & \textrm{ if } \beta \in [0,1)\\
\frac{np + \alpha p}{n-sp} & \textrm{ if } \beta =1.
\end{array}\right.
\end{equation}

Our first results concern the range of existence and non-existence of weak solutions to \eqref{prob}.

\begin{thm}\label{teo1} 
Let $\alpha>0$, $p>1$ and $s\in (0,1)$ be such that 
$$
p<n \quad \text{ when } \beta\in [0,1), \qquad sp<n \quad \text{ when } \beta=1.
$$
Then for all  $p<q<p^*_{\beta,\alpha}$ there exists a positive weak solution $u\in X_{\beta,rad}(B)$ to \eqref{prob}.

Moreover, we have that $u\in L^\infty(B)$ for $\beta\in [0,1)$ when $0<s<1<p<q<p^*_{\beta,\alpha}$ such are that $p<n$ and
\begin{equation} \label{bound.cond.1}
\alpha > \max\left\{0, (q-1)\left(\dfrac{n}{p}-1\right)-p\right\},
\end{equation}
and for $\beta=1$ when $0<s<1<p<q<p^*_{\beta,\alpha}$ are such   that $sp<n$ and
\begin{equation} \label{bound.cond.2}
 \alpha > \max\left\{0, (q-1)\left(\dfrac{n}{p}-s\right)-sp\right\},
\end{equation}
or for all $\alpha>0$ when   $1<p<q<p^*_{\beta,\alpha}$ are such that $sp<n$ and the range for $s$ is
\begin{equation} \label{bound.cond.3}
0<s< \frac{n}{p} \frac{q-1}{p+q-1}.
\end{equation}
\end{thm}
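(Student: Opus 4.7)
The plan is to produce a nontrivial critical point of $\J$ on the radial space $X_{\beta,rad}(B)$ via the Mountain Pass theorem, and then upgrade its regularity to $L^\infty$ by a Moser-type iteration. The crucial ingredient that makes the enlarged exponent $p^*_{\beta,\alpha}>p^*_\beta$ accessible is a weighted compact embedding $X_{\beta,rad}(B)\hookrightarrow L^q(B,|x|^\alpha\,dx)$ valid for every $p<q<p^*_{\beta,\alpha}$; this extends the Ni--Strauss mechanism used in the purely local (\cite{N,Na}) and purely nonlocal (\cite{SW}) frameworks to the mixed operator $\LL$.

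First I would establish this compact embedding. For a radial $u\in X_{\beta,rad}(B)$, a Strauss-type pointwise decay of the form $|u(x)|\leq C\,\|u\|_{X_{\beta,rad}}\,|x|^{-\gamma}$ holds away from the origin, with $\gamma$ depending only on $n$ and $p$ (or $n$ and $sp$ when $\beta=1$). Combined with the classical Rellich--Kondrachov compactness on any annulus $\{\rho\leq |x|<1\}$ and with the extra integrability provided by $|x|^\alpha$ near the origin, this yields compactness up to the exponent $p^*_{\beta,\alpha}$. With this in hand the Mountain Pass geometry is immediate: $\J(0)=0$; by $q>p$ together with the (classical/fractional) Sobolev embedding, $\J(u)\geq c_1\|u\|^{p}-c_2\|u\|^{q}\geq\rho>0$ on a small sphere; and $\J(tv)\to-\infty$ as $t\to+\infty$ for any fixed non-negative radial $v\not\equiv 0$. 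The Palais--Smale condition is verified in the standard way by combining the boundedness of PS sequences (since $q>p$) with the weighted compact embedding, and the Mountain Pass theorem yields a nontrivial critical point $u$.

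Positivity of $u$ is obtained by testing $\langle \J'(u),u^-\rangle=0$: the local part contributes $-(1-\beta)\|\nabla u^-\|_p^p$, while the standard pointwise inequality $|a-b|^{p-2}(a-b)(a^--b^-)\leq -|a^--b^-|^p$ (valid for $p>1$) gives $-\beta K(n,s)[u^-]_{s,p}^p$ from the nonlocal bilinear form; since $|x|^\alpha(u^+)^{q-1}u^-\equiv 0$, the equation forces $u^-\equiv 0$, and strict positivity on $B$ follows from the strong maximum principle for $\LL$.

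For the $L^\infty$ bound I would run a Moser-type iteration: testing the equation with $\varphi=u\min(u,M)^{\tau}$ for $\tau>0$ (and then letting $M\to\infty$), and using either the Sobolev embedding for $\|\nabla u\|_p$ or the fractional Sobolev embedding for $[u]_{s,p}$, yields a chain of estimates $\|u\|_{L^{q_{k+1}}(B)}\leq C_k\,\|u\|_{L^{q_k}(B)}^{\theta_k}$ along a sequence $q_k\to\infty$. The technical assumptions \eqref{bound.cond.1}--\eqref{bound.cond.3} encode precisely the requirement that the iteration can be initialised from the Sobolev exponent (so that the weighted right-hand side $|x|^{\alpha}u^{q+\tau-1}\varphi$ is integrable at the first step) and that the product $\prod C_k^{1/q_k}$ remains bounded. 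I expect the main obstacle to be controlling the nonlocal Gagliardo quantity $\beta[\varphi]_{s,p}^p$ in terms of a corresponding power $[u^{1+\tau/p}]_{s,p}^p$ of the unknown; this step requires an algebraic convexity inequality of the form $|a-b|^{p-2}(a-b)(f(a)-f(b))\geq c\,|F(a)-F(b)|^{p}$ for suitable $f,F$, applied uniformly in the truncation parameter $M$ and combined carefully with the weight $|x|^{\alpha}$, which is especially delicate in the pure nonlocal regime $\beta=1$ where the gradient contribution is no longer available to absorb remainder terms.
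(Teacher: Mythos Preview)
Your existence argument via the Mountain Pass theorem on $X_{\beta,rad}(B)$, built on a Strauss--Ni radial decay estimate and the resulting weighted compact embedding $X_{\beta,rad}(B)\hookrightarrow L^q(B,|x|^\alpha\,dx)$, is exactly the route the paper takes (Lemmas~\ref{lema.comp}, \ref{l.e.2}, \ref{2ps}). The positivity step you sketch is standard and fine, though the paper does not spell it out.

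The boundedness part, however, is handled differently. The paper does \emph{not} run a Moser iteration. Instead it proves a general Stampacchia-type level-set estimate (Theorem~\ref{BOUNDED}): any weak solution of $\LL u=f$ with $f\in L^r(\Omega)$ and $r>n/p$ is bounded (for $\beta=1$ the analogous result of Brasco--Parini is invoked). Boundedness of the H\'enon solution then follows in one line: by the radial lemma $|u(x)|\le C|x|^{-\gamma}$ with $\gamma=\tfrac{n}{p}-1$ (resp.\ $\tfrac{n}{p}-s$ when $\beta=1$), so $|x|^\alpha u^{q-1}\in L^r(B)$ as soon as $r(\alpha-\gamma(q-1))+n>0$; requiring some such $r>n/p$ is \emph{exactly} what produces the thresholds \eqref{bound.cond.1}--\eqref{bound.cond.3}. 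In other words, those conditions are not bootstrapping constraints for a Moser scheme but integrability conditions for the right-hand side in a linear-in-$f$ $L^\infty$ estimate.

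Your Moser route is plausible in spirit, but note a real obstacle: since $q$ may exceed the unweighted critical exponent $p^*_\beta$, the standard iteration (which needs $q\le p^*_\beta$ to close the first step via Sobolev) does not start without additional input. You would have to inject the radial decay into the iteration to compensate for the supercritical growth, and once you do that the argument effectively reduces to showing $|x|^\alpha u^{q-1}\in L^r$ for some $r>n/p$, which is precisely the paper's shortcut. So your interpretation of \eqref{bound.cond.1}--\eqref{bound.cond.3} as ``initialisation of Moser'' is off; they encode $r>n/p$ in the Stampacchia estimate, and the paper's approach is both shorter and the natural explanation for the specific form of those thresholds.
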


\medskip

\begin{thm} \label{teo2}
Let $\alpha>0$, $p>1$ and $s\in (0,1)$ be such that 
$$
p<n \quad \text{ when } \beta\in [0,1), \qquad sp<n \quad \text{ when } \beta=1.
$$
Then for all $q>p^*_{\beta,\alpha}$ problem \eqref{prob} has no solutions $u\in X_\beta(B)\cap W^{1,r}(B)\cap L^\infty(B)$, for some $r>1$.

In particular, there are no positive weak solutions  to \eqref{prob} for
\begin{itemize}
\item $\beta\in [0,1)$ when $p<n$ and \eqref{bound.cond.1} holds;
\item $\beta=1$ when  $sp<n$ and \eqref{bound.cond.2} holds, or when $\alpha>0$ and \eqref{bound.cond.3} holds.
\end{itemize}
\end{thm}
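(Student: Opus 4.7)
The natural strategy is to combine a Pohozaev-type identity for the mixed operator $\LL$ with the energy identity obtained by testing the equation against $u$. Since $B$ is star-shaped with respect to the origin, $x\cdot\nu>0$ on $\partial B$, which will give the correct sign to the boundary contributions. The extra regularity $u\in W^{1,r}(B)\cap L^\infty(B)$ is used precisely to legitimise the integrations by parts that arise when one tests the equation against the Pohozaev multiplier $x\cdot\nabla u$.

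For the local part $-\Delta_p u$, the classical Pucci--Serrin computation yields
\begin{equation*}
\frac{n-p}{p}\int_B |\nabla u|^p\,dx + \frac{p-1}{p}\int_{\partial B}(x\cdot\nu)|\nabla u|^p\,d\sigma = \frac{n+\alpha}{q}\int_B |x|^\alpha u^q\,dx.
\end{equation*}
For the nonlocal part $(-\Delta_p)^s u$ one invokes the fractional Pohozaev identity in the spirit of Ros-Oton--Serra (and of its extensions to the fractional $p$-Laplacian), which reads
\begin{equation*}
\frac{n-sp}{p}\,[u]_{s,p}^p + \mathcal{B}_s(u) = \frac{n+\alpha}{q}\int_B |x|^\alpha u^q\,dx,
\end{equation*}
where $\mathcal{B}_s(u)\geq 0$ is a boundary contribution of the type $c_{n,s,p}\int_{\partial B}(x\cdot\nu)(u/d^s)^p\,d\sigma$, with $d$ the distance to $\partial B$. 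Adding these identities with weights $(1-\beta)$ and $\beta$ produces the Pohozaev identity for $\LL$.

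On the other hand, choosing $u$ itself as a test function in the weak formulation of \eqref{prob} yields the energy identity
\begin{equation*}
(1-\beta)\|\nabla u\|_{L^p(B)}^p + \beta\,[u]_{s,p}^p = \int_B |x|^\alpha u^q\,dx.
\end{equation*}
Multiplying this by $(n+\alpha)/q$ and subtracting from the combined Pohozaev identity gives
\begin{equation*}
\begin{aligned}
& (1-\beta)\left[\tfrac{n-p}{p}-\tfrac{n+\alpha}{q}\right]\|\nabla u\|_{L^p(B)}^p + \beta\left[\tfrac{n-sp}{p}-\tfrac{n+\alpha}{q}\right][u]_{s,p}^p \\
& \qquad = -(1-\beta)\tfrac{p-1}{p}\int_{\partial B}(x\cdot\nu)|\nabla u|^p\,d\sigma - \beta\,\mathcal{B}_s(u).
\end{aligned}
\end{equation*}
For $q>p^*_{\beta,\alpha}$, the very definition of $p^*_{\beta,\alpha}$ forces $(n-p)/p>(n+\alpha)/q$ when $\beta\in[0,1)$ and $(n-sp)/p>(n+\alpha)/q$ when $\beta=1$; together with $s<1$ (so that $(n-sp)/p\geq (n-p)/p$), both bracketed coefficients are then strictly positive in each case. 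Consequently the left-hand side is strictly positive whenever $u\not\equiv 0$, whereas the right-hand side is non-positive: contradiction.

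The \emph{In particular} part then follows at once, since each of \eqref{bound.cond.1}, \eqref{bound.cond.2}, \eqref{bound.cond.3} is precisely a hypothesis under which any weak solution in $X_\beta(B)$ is a posteriori in $W^{1,r}(B)\cap L^\infty(B)$ for some $r>1$, putting us back in the previous setting. The main obstacle in the plan is the rigorous derivation of the nonlocal piece of the Pohozaev identity: it requires sharp boundary behaviour of $u$ (of the order of $d^s$) and, when $p\neq 2$, relies on the more recent refinements of the Ros-Oton--Serra technique tailored to the fractional $p$-Laplacian, together with a careful handling of the cross terms that appear when the local and nonlocal multiplier computations are merged.
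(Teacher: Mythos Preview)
Your plan is in the right spirit---Pohozaev plus energy identity is exactly the mechanism behind nonexistence---but it differs from the paper's proof and carries a genuine regularity issue that the paper's route sidesteps.

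The paper does not derive the Pohozaev identity for $\LL$ at all. Instead it verifies that $\LL$ has the scaling property $\|u_\lambda\|_\beta \le \lambda^{-\gamma}\|u\|_\beta$ and that the nonlinearity $f(x,t)=|x|^\alpha t^{q-1}$ is supercritical in the sense of Ros--Oton and Serra, and then invokes \cite[Proposition~1.4]{ROS} as a black box to conclude $u\equiv 0$. The point is that the argument inside that black box is \emph{not} the explicit Pohozaev identity with boundary terms that you sketch: it is a variational scaling argument (comparing the energy along the path $\lambda\mapsto u(\lambda\,\cdot)$ for $\lambda>1$) which yields an \emph{inequality} strong enough to force $u\equiv 0$ while requiring only $u\in X_\beta(B)\cap W^{1,r}(B)\cap L^\infty(B)$.

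This matters because your identity with the explicit boundary terms $\int_{\partial B}(x\cdot\nu)|\nabla u|^p\,d\sigma$ and $\mathcal{B}_s(u)\sim\int_{\partial B}(x\cdot\nu)(u/d^s)^p\,d\sigma$ needs far more than $W^{1,r}\cap L^\infty$: for the local part one typically wants $u\in C^1(\overline B)$, and for the fractional $p$-Laplacian one needs $u/d^s$ H\"older up to the boundary (Iannizzotto--Mosconi--Squassina type regularity). You flag this as an ``obstacle'', but it is not merely a technicality to be filled in---under the stated hypotheses those boundary traces are simply not available, so the identity you wrote cannot be justified as written. Either you would have to first prove the missing boundary regularity (which, for the mixed operator and general $p$, is a substantial project in itself), or switch to the scaling-inequality argument of \cite{ROS}, which is precisely what the paper does. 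The ``In particular'' part you handle correctly: the paper likewise combines Corollary~\ref{coro.bound}/Remark~\ref{rem.bound} (for $L^\infty$) with \cite[Proposition~2.2]{DRV} (for $W^{1,p}$) to reduce to the first part.
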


\medskip

The proof of Theorem \ref{teo1} follows the scheme introduced by Ni in \cite{N}. To this aim, we need several ingredients, like a proper version of the Radial Lemma (see Section \ref{sec:radialLemma}) and compactness of the embeddings of the appropriate radial Sobolev spaces. The proof of Theorem \ref{teo2} is heavily based on a previous non-existence result by Ros--Oton and Serra \cite{ROS}. In order to apply it, we have to prove a regularity result of independent interest, namely Theorem \ref{BOUNDED}, where we show the boundedness of weak solutions of
\begin{equation*}
    \begin{cases}
    \LL u = f & \text{in $\Omega$}, \\
    u = 0 & \text{in } \mathbb{R}^n\setminus \Omega,
   \end{cases}
   \end{equation*}
\noindent when $f \in L^{r}(\Omega)$ with $r> \tfrac{n}{p}$.

\medskip

Our next result establishes the stability of solutions of uniformly bounded solutions of \eqref{prob} as $s\nearrow 1$.

\begin{thm} \label{teo3}
Let $\beta \in (0,1]$. Let $p>1$ and $s_k \in (0,1)$ be such that $s_kp<n$ and $s_k \to 1$ as $k \to +\infty$. Given $p<q<p^*_{\beta,\alpha}$, let $u_{k}\in X_{\beta,rad}(B)$ be a solution of \eqref{prob} such that 
$$\sup_{k \in \mathbb{N}} \, \|u_k\|_{\beta} <+\infty.$$
Then, every accumulation point $u$ of $\{u_k\}_{k\in\N}$ in the $L^{p}(B)$-topology is a weak solution of \eqref{prob} with $\beta = 0$.
\end{thm}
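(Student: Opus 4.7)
The plan is to combine a Bourgain--Brezis--Mironescu (BBM)-type argument with the classical monotonicity machinery for the $p$-Laplacian, noting that as $s_k \nearrow 1$ the operator $\LL$ formally collapses to $-\Delta_p$, whence the limiting problem has $\beta=0$.

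\textbf{Step 1 (extraction of the limit).} The uniform bound $\sup_k\|u_k\|_\beta<+\infty$, together with the normalization of the Gagliardo seminorm built into $K(n,s_k)$, allows one to apply either direct weak $W^{1,p}$-compactness (for $\beta<1$) or the lower semicontinuity $\|\nabla u\|_{L^p}^p\le \liminf_k[u_k]_{s_k,p}^p$ of Ponce's type (for $\beta=1$) to conclude, along a subsequence with $u_k\to u$ in $L^p(B)$, that $u\in W^{1,p}_0(B)$. Sobolev--Rellich compactness and interpolation upgrade the convergence to strong $L^q(B)$ for every $q<p^*_\beta$, in particular for the subcritical source exponent appearing in \eqref{prob}.

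\textbf{Step 2 (strong convergence via monotonicity -- the main obstacle).} Since $u\in W^{1,p}_0(B)\hookrightarrow W^{s_k,p}(B)$, the difference $u_k-u$ is admissible in $X_{\beta,rad}(B)$. Testing the $k$-th Euler--Lagrange equation $\langle \J'(u_k),u_k-u\rangle=0$ and rearranging gives
\begin{equation*}
(1-\beta)\mathcal{M}_k^{\mathrm{loc}} + \beta\,\mathcal{M}_k^{\mathrm{nloc}} = \int_B|x|^{\alpha}u_k^{q-1}(u_k-u)\,dx - (1-\beta)\!\int_B\! J_p(\nabla u)\cdot\nabla(u_k-u)\,dx - \beta N_{s_k}(u,u_k-u),
\end{equation*}
where $J_p(\xi):=|\xi|^{p-2}\xi$, $N_s(\cdot,\cdot)$ denotes the Gagliardo bilinear pairing, and $\mathcal{M}_k^{\mathrm{loc}},\mathcal{M}_k^{\mathrm{nloc}}\ge 0$ are the standard monotonicity quantities for the local and nonlocal $p$-Laplacians. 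The first term on the right vanishes by strong $L^q$ convergence, the second (if $\beta<1$) by weak $W^{1,p}$ convergence of $u_k$. The third is the genuinely delicate one: approximating $u$ by $\psi\in C_c^\infty(B)$ in $W^{1,p}_0$, one exploits the uniform $L^\infty$ convergence $(-\Delta_p)^{s_k}\psi\to -\Delta_p\psi$ as $s_k\nearrow 1$ together with $u_k-u\to 0$ in $L^{p'}(B)$ to obtain $N_{s_k}(\psi,u_k-u)\to 0$, and controls the error through a H\"older inequality based on $[u-\psi]_{s_k,p}\to \|\nabla(u-\psi)\|_{L^p}$. Consequently both $\mathcal{M}_k^{\mathrm{loc}}$ and $\mathcal{M}_k^{\mathrm{nloc}}$ tend to zero, and the classical convexity estimates for $J_p$ yield the strong convergences $\nabla u_k\to \nabla u$ in $L^p(B)$ and $[u_k-u]_{s_k,p}\to 0$. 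This fractional analogue of the Boccardo--Murat step is the main technical obstacle and the only genuinely new ingredient.

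\textbf{Step 3 (limiting equation and positivity).} For $\varphi\in C_c^\infty(B)$, the strong gradient convergence from Step 2 passes the local piece of $\langle \J'(u_k),\varphi\rangle$ to its limit. For the nonlocal piece, a H\"older estimate (again based on Step 2) gives $N_{s_k}(u_k,\varphi)-N_{s_k}(u,\varphi)\to 0$, while the BBM identity for cross bilinear forms yields $N_{s_k}(u,\varphi)\to\int_B|\nabla u|^{p-2}\nabla u\cdot\nabla\varphi\,dx$. Combined with $\int_B|x|^\alpha u_k^{q-1}\varphi\,dx\to \int_B|x|^\alpha u^{q-1}\varphi\,dx$ and the convex combination $(1-\beta)+\beta=1$, one obtains $-\Delta_p u=|x|^\alpha u^{q-1}$ in $B$ in the weak sense. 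Pointwise positivity $u\ge 0$ is inherited from $u_k>0$; the nontriviality $u\not\equiv 0$ required to identify $u$ as a solution of \eqref{prob} with $\beta=0$ follows from a uniform positive lower bound on $\|u_k\|_\beta$ implicit in the mountain-pass construction of Theorem~\ref{teo1}, after which the strong maximum principle for $-\Delta_p$ gives $u>0$ in $B$.
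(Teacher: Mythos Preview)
Your proposal is correct and reaches the same conclusion, but it proceeds along a genuinely different route from the paper's own proof.

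The paper uses the \emph{Minty--Browder trick}: it sets $\eta_k:=\mathcal{L}_{s_k,p,\beta}u_k$, extracts a weak limit $\eta$ in $W^{-1,p'}(B)$, and then, using only the monotonicity inequality
\[
0\le \langle \mathcal{L}_{s_k,p,\beta}u_k-\mathcal{L}_{s_k,p,\beta}v,\,u_k-v\rangle
\]
together with Lemma~\ref{lemma.aux} (which gives $\langle \mathcal{L}_{s_k,p,\beta}v,u_k-v\rangle\to\langle -\Delta_p v,u-v\rangle$), passes to the limit to obtain $\langle \eta-(-\Delta_p v),u-v\rangle\ge 0$ for every $v$. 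Taking $v=u-tw$ and letting $t\to 0^+$ then identifies $\eta=-\Delta_p u$, without ever proving strong convergence of the $u_k$.

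You instead follow an \emph{$(S_+)$-type} strategy: you test the equation with $u_k-u$, split off the nonnegative monotonicity quantities $\mathcal{M}_k^{\mathrm{loc}}$, $\mathcal{M}_k^{\mathrm{nloc}}$, and drive the right-hand side to zero. This forces $[u_k-u]_{s_k,p}\to 0$ (and $\nabla u_k\to\nabla u$ in $L^p$ when $\beta<1$), after which the passage to the limit in the weak formulation is direct. Both arguments hinge on the same BBM-type fact---the convergence of the nonlocal cross term $N_{s_k}(u,\cdot)$ to the local one, which you obtain by smooth approximation and the pointwise convergence $(-\Delta_p)^{s_k}\psi\to -\Delta_p\psi$, while the paper packages it as Lemma~\ref{lemma.aux} via differentiation of BBM. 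Your route yields extra information (strong energy convergence, which in fact is what powers the Corollary following Theorem~\ref{teo3}); the paper's Minty argument is more economical in that it bypasses strong convergence entirely. Two minor points: in Step~1 the relevant compactness for the source term is the \emph{weighted} embedding of Lemma~\ref{lema.comp} (since $q$ may exceed $p^*_\beta$), and in Step~3 your nontriviality argument does not require the mountain-pass origin of $u_k$---testing \eqref{prob} with $u_k$ and using the uniform weighted Sobolev inequality already gives a lower bound on $\|u_k\|_\beta$.
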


Since the $\beta$-norm of weak solutions to \eqref{prob} can be bounded uniformly on $s$ (see Remark \ref{ToPassToTheLimit.2}), we immediately obtain the following consequence.

\begin{cor}
Let $\beta \in (0,1]$. Let $p>1$ and $s_k \in (0,1)$ be such that $s_kp<n$ and $s_k \to 1$ as $k \to +\infty$. Given $p<q<p^*_{\beta,\alpha}$, let $u_{k}\in X_{\beta,rad}(B)$ be a weak solution of \eqref{prob}. Then 
$$
u_k \to u \text{ strongly in } L^p(B) \text{ as } k\to\infty
$$
where $u \in X_{\beta,rad}(B)$ is a solution of \eqref{prob} with $\beta=0$.
\end{cor}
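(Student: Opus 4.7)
The plan is to deduce the corollary from Theorem \ref{teo3} by producing an accumulation point in $L^p(B)$; the only real work is to verify that strong $L^p$-precompactness actually holds for the sequence $\{u_k\}$ under a uniform bound on $\|u_k\|_\beta$, and this splits into two cases according to whether $\beta\in(0,1)$ or $\beta=1$.

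First, by Remark \ref{ToPassToTheLimit.2} every weak solution $u_k$ satisfies a bound on $\|u_k\|_\beta$ that is uniform in $k$ (equivalently, in $s_k$). When $\beta\in(0,1)$ this immediately gives a uniform bound on $\|\nabla u_k\|_{L^p(B)}$ through the local part of the energy, and by the usual compact Sobolev embedding $W^{1,p}_0(B)\hookrightarrow L^p(B)$ (applied to the radial subspace) we can extract a subsequence converging strongly in $L^p(B)$ to some $u\in X_{\beta,rad}(B)$.

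When $\beta=1$ the situation is more delicate because the uniform bound is only on the Gagliardo seminorms $[u_k]_{s_k,p}$ with $s_k\to 1$. Here the compactness step relies on the Bourgain--Brezis--Mironescu-type result: any sequence $\{u_k\}$ in the radial subspace of $W^{s_k,p}_0(B)$ with $\sup_k [u_k]_{s_k,p}<\infty$ is relatively compact in $L^p(B)$, and any accumulation point lies in $W^{1,p}_0(B)$. This is exactly the ingredient that underlies Theorem \ref{teo3} and the constant $K(n,s)$ appearing in the definition of $[\cdot]_{s,p}$, so no extra work is needed beyond invoking it. The main obstacle is conceptual: ensuring that the notion of ``accumulation point in $L^p(B)$'' used in Theorem \ref{teo3} is automatically available here, and this is precisely what the uniform $\beta$-norm bound guarantees.

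Having produced, up to a subsequence, a strong $L^p(B)$-limit $u$, I would then apply Theorem \ref{teo3} directly: the limit $u$ is a weak solution of \eqref{prob} with $\beta=0$, and $u\in X_{\beta,rad}(B)$ since radial symmetry is preserved under $L^p$-convergence. Finally, to upgrade ``convergence along a subsequence'' to ``convergence of the full sequence'' (if indeed the statement is meant in that strict sense), one uses the standard subsequence-of-subsequence argument: every subsequence of $\{u_k\}$ admits a further sub-subsequence converging in $L^p(B)$ to a weak radial solution of the local problem, so the full sequence accumulates only on such limits in $L^p(B)$, which is the content of the corollary.
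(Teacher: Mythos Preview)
Your proposal is correct and follows exactly the approach the paper intends: the paper's entire argument is the one-line observation preceding the corollary, namely that Remark~\ref{ToPassToTheLimit.2} supplies the uniform bound $\sup_k\|u_k\|_\beta<\infty$ required by Theorem~\ref{teo3}, and you have simply spelled out the compactness step (Rellich--Kondrachov for $\beta\in(0,1)$, BBM-type compactness for $\beta=1$) that the paper leaves implicit. Your caveat about full-sequence versus subsequential convergence is well taken and is not addressed in the paper either.
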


We remark that, since the above convergence implies in particular that $u_k\to u$ a.e., the limit function $u$ is actually radial. 

\medskip

Our last result extend the result of Smets, Willem and Su to the {\it linear} mixed case, showing the possibly intuitive fact that coupling a nonlocal part with the local one still allows for non-radial solutions to \eqref{prob}.

\begin{thm}\label{teo4}
Given $n\geq 3$ and $0\leq \beta < 1$, then for any $2+\beta<q<2^*$, there exists $\alpha^*>0$ such that no ground state of \eqref{prob} with $p=2$ is radial, provided $\alpha>\alpha^*$.
\end{thm}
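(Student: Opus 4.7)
The plan is to adapt the energy-comparison approach of Smets, Willem and Su \cite{SSW} to the mixed operator $\LL$. I would work with the Nehari manifold
$$ \mathcal{N}_\alpha := \{u \in X_\beta(B)\setminus\{0\}:\langle \J'(u),u\rangle = 0\}, \qquad c_\alpha := \inf_{\mathcal{N}_\alpha}\J, $$
and its radial counterpart $c_\alpha^{\rm rad} := \inf_{\mathcal{N}_\alpha \cap X_{\beta,rad}(B)} \J$; on $\mathcal{N}_\alpha$ one has $\J(u) = (\tfrac{1}{2}-\tfrac{1}{q})\|u\|_\beta^2$. Since $c_\alpha \leq c_\alpha^{\rm rad}$ automatically, it is enough to prove the strict inequality $c_\alpha < c_\alpha^{\rm rad}$ for all $\alpha$ larger than some threshold $\alpha^*$: any ground state will then lie outside $X_{\beta,rad}(B)$.

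For the upper bound, I would fix a non-trivial $\phi \in C_c^\infty(B_1(0))$, set $r_\alpha := 1/\alpha$, pick $x_\alpha \in B$ with $|x_\alpha|=1-3r_\alpha$, and use the concentrated test function $\phi_\alpha(x) := \phi((x-x_\alpha)/r_\alpha)$. The standard scaling identities give
$$ \|\nabla\phi_\alpha\|_{L^2}^2 = r_\alpha^{n-2}\|\nabla\phi\|_{L^2}^2, \qquad [\phi_\alpha]_{s,2}^2 = r_\alpha^{n-2s}[\phi]_{s,2}^2, \qquad \int\phi_\alpha^q\,dx = r_\alpha^n\|\phi\|_{L^q}^q; $$
since $s<1$, one has $r_\alpha^{n-2} \geq r_\alpha^{n-2s}$ for $r_\alpha<1$, so the local part dominates and $\|\phi_\alpha\|_\beta^2 \leq C\, r_\alpha^{n-2}$ (with a constant depending on $1-\beta$). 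Moreover $|x|^\alpha \geq (1-4 r_\alpha)^\alpha \geq e^{-5}$ on $\supp(\phi_\alpha)$ for $\alpha$ large. Projecting $\phi_\alpha$ onto $\mathcal{N}_\alpha$ by a scalar $t_\alpha \sim r_\alpha^{-2/(q-2)}$ then yields
$$ c_\alpha \leq \J(t_\alpha \phi_\alpha) = \bigl(\tfrac{1}{2}-\tfrac{1}{q}\bigr)\, t_\alpha^2 \,\|\phi_\alpha\|_\beta^2 \leq C\,\alpha^{\frac{4}{q-2}-(n-2)}. $$

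The matching lower bound on $c_\alpha^{\rm rad}$ rests on a refined radial pointwise estimate: for every $u \in X_{\beta,rad}(B)$,
$$ |u(r)|^2 \leq \tfrac{C}{1-\beta}\,(1-r)\, r^{-(n-1)}\,\|u\|_\beta^2, $$
which follows from $u(1)=0$, Cauchy--Schwarz applied to $u(r) = -\int_r^1 u'(\rho)\,d\rho$ with weight $\rho^{n-1}$, and the trivial inequality $\|\nabla u\|_{L^2}^2 \leq (1-\beta)^{-1}\|u\|_\beta^2$; the hypothesis $n\geq 3$ enters to make $\int_r^1 \rho^{-(n-1)}\,d\rho$ behave like $r^{-(n-2)}$. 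Plugging this into the radial representation of the integral,
$$ \int_B |x|^\alpha u^q\,dx \leq C\,\|u\|_\beta^q \int_0^1 r^{\alpha + n-1 - \frac{q(n-1)}{2}}\,(1-r)^{q/2}\,dr, $$
and using the Beta-function asymptotic $B(\mu,\tfrac{q}{2}+1) \sim \Gamma(\tfrac{q}{2}+1)\,\mu^{-(q/2+1)}$ as $\mu \sim \alpha \to \infty$, I would obtain $\int_B |x|^\alpha u^q\,dx \leq C\,\alpha^{-(q/2+1)}\,\|u\|_\beta^q$. The Nehari identity $\|u\|_\beta^2 = \int_B |x|^\alpha u^q$ then forces $\|u\|_\beta^2 \geq c\,\alpha^{(q+2)/(q-2)}$ and hence $c_\alpha^{\rm rad} \geq c\,\alpha^{(q+2)/(q-2)}$.

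Comparing the two exponents, $(q+2)/(q-2) - \bigl(4/(q-2) - (n-2)\bigr) = n-1 > 0$ for $n \geq 3$, so the lower-bound exponent strictly dominates and $c_\alpha < c_\alpha^{\rm rad}$ for $\alpha$ above some threshold $\alpha^*$. The main obstacle I anticipate is making the refined radial Strauss estimate genuinely quantitative in the mixed framework uniformly in $\beta \in [0,1)$, and carefully verifying that the fractional seminorm $[\phi_\alpha]_{s,2}$ is indeed lower order than $\|\nabla \phi_\alpha\|_{L^2}$ after the scaling $r_\alpha = 1/\alpha$; the technical assumption $q > 2+\beta$ should enter only in ensuring that the infimum $c_\alpha$ is actually attained by some (necessarily non-radial, by the above comparison) ground state of the mixed operator.
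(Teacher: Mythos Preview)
Your proposal is correct but follows a genuinely different strategy from the paper's. The paper does \emph{not} compare Nehari levels via test functions; instead it runs the original second-variation argument of \cite{SSW}. One first shows (by computing $g''(0)\geq 0$ for perturbations $v=u(r)f(\sigma)$ with $f$ a zero-mean spherical harmonic) that any radial minimizer of the Rayleigh quotient must satisfy
\[
\beta[u]_{s,2}^2+\|\nabla u\|_{L^2(B)}^2\;\le\;\frac{(1+\beta)(n-1)}{q-2-\beta}\int_B\frac{u^2}{|x|^2}\,dx,
\]
and then proves, via Hardy's inequality and a boundary-concentration argument, that for the normalized radial ground state $u_\alpha$ one has $\int_B u_\alpha^2/|x|^2\,dx\to 0$ as $\alpha\to\infty$, contradicting the normalization. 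The assumption $q>2+\beta$ is \emph{essential} in this route: it is exactly what makes the coefficient $(1+\beta)(n-1)/(q-2-\beta)$ positive, so that the second-variation inequality is non-trivial.

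Your direct energy comparison (upper bound on $c_\alpha$ from a bump near the boundary, lower bound on $c_\alpha^{\rm rad}$ from a refined Ni--Strauss decay estimate with a $(1-r)^{1/2}$ factor, then comparing the resulting powers of $\alpha$) is sound and has the advantage of being more quantitative: it gives explicit rates $c_\alpha\le C\alpha^{4/(q-2)-(n-2)}$ and $c_\alpha^{\rm rad}\ge c\,\alpha^{(q+2)/(q-2)}$, and in fact works for the full range $2<q<2^*$, not just $q>2+\beta$. Your closing remark that the hypothesis $q>2+\beta$ ``should enter only in ensuring that the infimum $c_\alpha$ is actually attained'' is therefore off: ground states exist for every $2<q<2^*$ by standard compactness, so in your approach that hypothesis plays no role at all. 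In the paper's approach, by contrast, it is indispensable.
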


We want to stress that the proof exploits several relations between the local and nonlocal terms and is heavily based on the original proof in \cite{SSW} in the linear case. This is the reason why the pure nonlocal case is actually not covered by our result.

\medskip

We want close this Introduction spending a few words regarding the existing literature: as far as we know, 
Fonduun \cite{Foondun} was the first who proved bounds for the heat kernel associated to \eqref{eq:Operator} for $p=2$, a Harnack inequality and a regularity theorem for {\it mixed local-nonlocal harmonic} functions. A few years later, in \cite{CKSV} the authors proved Harnack estimates in the linear case. Further regularity results for $p=2$ are contained in \cite{BDVV}, where the authors prove local $H^{k+2}$ estimates and several maximum principles. Qualitative properties of solutions of semilinear equations in the spirit of the classical results by Gidas, Ni and Nirenberg have been proved in \cite{BDVVAcc}, while a quantitative version of a Faber-Krahn inequality has been proved in \cite{BDVV2}. We refer also to \cite{GoelSreenadh} for related results. Recently, the nonlinear operator \eqref{eq:Operator} has also been considered: in \cite{BSM} the authors considered an eigenvalue problem for a system of local-nonlocal operators and studied the asymptotics as $p \to +\infty$. In a similar spirit, in \cite{SilvaSalort} the authors considered a nonlinear equation with concave-convex right hand side. We also quote \cite{BMV} where the authors considered a nonlinear version of the famous Brezis-Oswald problem, and \cite{GarainKinnunen,Garain3} where the authors proved regularity results and studied {\it mixed Sobolev inequalities} and quasilinear singular problems in the spirit of Boccardo and Murat. We finally mention \cite{DPLV1} for a first study of such operators with (nonlocal) Neumann boundary conditions. We want also to remind that there is a pretty active line of research dealing with evolution equations having the operator \eqref{eq:Operator} as elliptic part: we refer e.g. to \cite{DPLV2, GarainKinnunen2}  and to \cite{DV} where such operators are used to propose a model to describe the diffusion of a biological population living in an ecological niche and subject to both local and nonlocal dispersals. Finally, we want to mention that several papers, see e.g. \cite{CK,DPFR,dSOR,GQR}, have also investigated the coupling of a local operator with a nonlocal and nonsingular one having a different kernel with respect to the one considered in the present note.

\medskip

The paper is organized as follows: in Section \ref{sec.preliminaries} we collect all the preliminaries needed to prove our results. In particular, we introduce the functional and variational setting to study \eqref{prob}, we collect the different versions of the radial Lemma needed for our purposes and we prove the boundedness of the solutions. In Section \ref{sec.existence} we prove Theorem \ref{teo1} and Theorem \ref{teo2}, while the proof of Theorem \ref{teo3} is postponed to Section \ref{sec.stability}. In Section \ref{sec.nonradial} we prove Theorem \ref{teo4}.
Finally, we add a brief Appendix \ref{sec.app} where we recall the classical Mountain Pass Lemma and a few other classical results used along the paper.

\section{Preliminaries} \label{sec.preliminaries}
\subsection{Function spaces}\label{sec.functionSpace}
Problem \eqref{prob} is the local-nonlocal and nonlinear counterpart of the H\'enon problem studied in the celebrated paper by Ni \cite{N}.
As already mentioned in the Introduction, the problem has to be settled in the proper function space, which we consider to be
\begin{equation}
X_{\beta}(B):= \left\{ 
\begin{array}{rl}
W^{1,p}_{0}(B) & \textrm{if } \beta =0\\
X_p(B) & \textrm{if } \beta \in (0,1)\\
W^{s,p}_0(B) & \textrm{if } \beta =1,
\end{array}\right.
\end{equation}
\noindent where $W^{1,p}_{0}(B)$ is the classical Sobolev space whose functions have null trace on the boundary of $B$, while for $\beta =1$ we have the classical fractional Sobolev space
\begin{equation}
W^{s,p}_0(B) := \left\{ u \in W^{s,p}(\mathbb{R}^n): u = 0 \textrm{ in } \mathbb{R}^n\setminus B\right\}.
\end{equation}
Concerning the {\it true} mixed local-nonlocal case, i.e. for $\beta \in (0,1)$, the space $X_p(B)$ has been introduced in \cite{BMV} for more general open sets $\Omega \subset \mathbb{R}^n$ with $C^1$-smooth boundary. Nevertheless, to simplify the reading, we recall its precision definition in our specific case:
  \begin{equation} \label{eq.defSpaceXp}
   X_p(B) := \left\{u\in W^{1,p}(\mathbb{R}^n):\,
   \text{$u\equiv 0$ a.e.\,on $\mathbb{R}^n\setminus B$}\right\}.
  \end{equation}
  Since $\partial B$ is smooth, we can identify
  $X_p(B)$ with the space $W_0^{1,p}(B)$ in the following sense: denoting by $\mathbf{1}_B$ the indicator function of the ball $B$, we have that
  \begin{equation} \label{eq.identifXWzero}
   u \in W_0^{1,p}(B)\,\,\Longleftrightarrow\,\,
  u\cdot\mathbf{1}_{B}\in X_p(B).
  \end{equation}  
Therefore, in what follows  we will always identify  a function $u\in W_0^{1,p}(B)$ with 
  $\hat{u} := u\cdot\mathbf{1}_B \in X_p(B),$
  \noindent obtained as a zero-extension outside of $B$.
 \medskip
 We further notice that by the Poincar\'e inequality and \eqref{eq.identifXWzero}, we get that the quantity
  $$\|u\|_{X_p} :=\left( \int_{B}\|\nabla u\|^p\, dx\right)^{1/p},
  \qquad u\in X_p(B),$$ 
  endows $X_p(B)$ with a structure of real Banach space, which is
  actually isometric to $W_0^{1,p}(B)$. We briefly list a couple of useful properties that hold true:
  \medskip
  \begin{enumerate}
   \item[(i)] $X_p(B)$ is separable and reflexive (being $p > 1$);
   \vspace*{0.05cm}
   \item[(ii)] $C_0^\infty(B)$ is dense in $X_p(B)$.
  \end{enumerate}
  Summarizing, we will denote
\begin{equation}
\|u\|_{\beta} := \left\{ \begin{array}{rl}
\|\nabla u\|_{L^{p}(B)} & \textrm{ if } \beta \in [0,1)\\
\left[u\right]_{s,p} & \textrm{ if } \beta =1.
\end{array}\right.
\end{equation}

  Finally, since we are mainly interested in proving results related to radial functions, we introduce the following subspaces:
  \begin{equation}
  X_{\beta,rad}(B) := \{u\in X_{\beta}(B): u \textrm{ is radial}\}.
  \end{equation}

\subsection{The local-nonlocal operator}
Given $\beta\in [0,1]$, a fractional parameter $s\in (0,1)$ and $p>1$  the   mixed local--nonlocal elliptic operator
$$
\LL u := (1-\beta)(-\Delta_p)u + \beta(- \Delta_p)^s u
$$
is well defined between $X_{\beta}(B)$ and its dual space $X^*_{\beta}(B)$ and the following representation formula holds:
\begin{equation*}
\begin{aligned}
\langle \LL u,v\rangle  = (1-\beta) & \int_{B} |\nabla u |^{p-2}\langle \nabla u, \nabla v\rangle \, dx \\
&+ \beta K(n,s) \iint_{\R^{2n}} \frac{|u(x)-u(y)|^{p-2}(u(x)-u(y))(v(x)-v(y))}{|x-y|^{n+sp}}\,dx \, dy 
\end{aligned}
\end{equation*}
for any $v\in X_\beta(\R^n)$.

Therefore, we say that $u\in X_\beta(B)$ is a \emph{weak solution} of \eqref{prob} if
$$
\langle \LL u,v\rangle =\int_B |x|^\alpha u^{q-1}v\,dx \qquad \textrm{for all } v \in X_\beta(B).
$$
Observe that with this notation we have 
\begin{equation} \label{relation}
\langle \LL u,v\rangle = \langle \mathcal{J}'(u),v\rangle + \int_\Omega |x|^\alpha u^{q-1}v\,dx \quad \textrm{for all } v \in X_\beta(B).
\end{equation}

\begin{rem} \label{cumple.s.prop}
Given $u,v\in X_\beta(B)$, by H\"older's inequality
\begin{align*}
\langle \LL u,v\rangle  &\leq (1-\beta)\left(\int_B (|\nabla u|^{p-1})^{p'}\,dx\right)^\frac{1}{p'} \left( \int_B |\nabla v|^p\right)^\frac{1}{p}\\ 
&+ \beta  K(n,s) \left(\iint_{\R^{2n}} \left(\frac{|u(x)-u(y)|^{p-1}}{|x-y|^{\frac{n+sp}{p'}}} \right)^{p'} \,dxdy\right)^\frac{1}{p'}\left(\iint_{\R^{2n}} \frac{|v(x)-v(y)|^p}{|x-y|^{n+sp}}\,dxdy\right)^\frac{1}{p}\\
&=(1-\beta)\|\nabla u\|_{L^p(\R^n)}^{p-1} \|\nabla v\|_{L^p(\R^n)} +\beta  [u]_{s,p}^{p-1}[v]_{s,p}.
\end{align*}
This relation together with \cite[Proposition 2.2]{DRV} gives that
$$
\langle \LL u,v\rangle \leq  C \|u\|_\beta^{p-1} \|v\|_\beta   \quad \textrm{for all } u,v\in X_\beta(B) \text{ and for every } \beta \in [0,1].
$$

Therefore, in light of Proposition \ref{prop-s}, $\LL$ satisfies the so-called \emph{$({\bf S})$-property} of compactness (see Definition \ref{def.s}).
\end{rem}

\subsection{Radial Lemma}\label{sec:radialLemma}
In \cite{Strauss}, Strauss shed some light on the relation between the regularity and the decay of a Sobolev function in $H^{1}(\mathbb{R}^n)$. In particular, he proved a nowadays famous pointwise inequality, often referred to as {\it Strauss' Radial Lemma}, which reads as follows: let $n\geq 2$ and let $u \in H^{1}_{rad}(\mathbb{R}^n)$, then
\begin{equation}\label{eq:StraussLemma}
|u(x)| \leq C(n) |x|^{(1-n)/2} \left\{ \|u\|^{1/2}_{L^{2}(\mathbb{R}^n)}\|\nabla u\|^{1/2}_{L^{2}(\mathbb{R}^n)}\right\}, \quad \textrm{for a.e. } x \in \mathbb{R}^n.
\end{equation}
It is also possible to prove that there exists a radial function $\tilde{u}$ which coincides almost everywhere with $u$ and that it is continuous outside of the origin. The inequality \eqref{eq:StraussLemma} plays a crucial role in proving compact embeddings. According to \cite[Remark I.3]{Lions}, the above inequality holds replacing the exponent of $|x|$ with $-\alpha + \tfrac{2-n}{n}$ for every $\alpha \in \left[0,\tfrac{1}{2}\right]$. Now, the case $\alpha =0$ gives back the exponent found by Ni in \cite{N}, where he was dealing with radial functions in $H^{1}_{0}(B)$. 
The nonlinear case is treated in \cite{Lions}, where Lions proved the following: let $n \geq \max\{2,p\}$ and let $u \in H^{1,p}_{rad}(\mathbb{R}^n)$, then,
\begin{equation}\label{eq:LionsLemma}
|u(x)| \leq C(n,p) |x|^{-\gamma} \left\{ \|u\|^{(p-1)/p}_{L^{p}(\mathbb{R}^n)}\|\nabla u\|^{1/p}_{L^{p}(\mathbb{R}^n)}\right\}, \quad \textrm{for a.e. } x \in \mathbb{R}^n,
\end{equation}
\noindent  for every $\gamma = \tfrac{n}{p} - \delta$, where
$\delta \in \left[\tfrac{1}{p},1\right]$. The case $\delta=1$ gives the analogous of \cite{N} for a general $p\neq 2$.\\
For the nonlocal case, we refer to \cite{ChoOzawa} in the linear case (see also \cite{SW}), and \cite{SSV} for the case $p \neq 2$. \\


In order to simplify the readability, we list below the variants of the Strauss' Lemma needed for our purposes. We start from the pure nonlocal case.

\begin{lema} [\cite{SSV}]\label{strauss-ni}
Let $p>1$ with $1\leq sp<n$. Then there exists $C>0$ with
$$
|u(x)|\leq C |x|^{s-\frac{n}{p}}\|u\|_{W^{s,p}(\R^n)}
$$
for every $u\in W^{s,p}_{rad}(\R^n)$ and for a.e. $0<|x|\leq 1$.
\end{lema}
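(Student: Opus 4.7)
My approach combines two classical ingredients: a reduction of the $n$-dimensional radial seminorm to a weighted one-dimensional seminorm via angular integration, and the fractional Hardy inequality, which is available precisely in the subcritical regime $sp<n$.

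First I would fix $r\in (0,1]$ and restrict attention to the annulus $A_r=\{x\in\R^n\colon r\leq |x|\leq 2r\}$, writing the radial function as $u(x)=U(|x|)$. The preliminary step is to express the restrictions of $\|u\|_{L^p(A_r)}$ and of the seminorm of $u$ on $A_r\times A_r$ as weighted $L^p$ and $W^{s,p}$ norms of the one-dimensional profile $U$ on $(r,2r)$. Passing to spherical coordinates gives at once
\[
\|u\|_{L^p(A_r)}^{\,p} \;=\; \omega_{n-1}\!\int_r^{2r}|U(\rho)|^p\rho^{n-1}\,d\rho \;\geq\; \omega_{n-1}\,r^{n-1}\|U\|_{L^p(r,2r)}^{\,p},
\]
while a careful estimate of the spherical integral $\int_{S^{n-1}\times S^{n-1}}|\rho\omega-\sigma\eta|^{-(n+sp)}\,d\omega\,d\eta$ for $\rho,\sigma$ comparable to $r$ produces the lower bound
\[
[u]_{s,p,A_r\times A_r}^{\,p} \;\gtrsim\; r^{n-1}\int_r^{2r}\!\int_r^{2r}\frac{|U(\rho)-U(\sigma)|^p}{|\rho-\sigma|^{1+sp}}\,d\rho\,d\sigma.
\]

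Next, the assumption $sp\geq 1$ allows me to invoke the one-dimensional fractional Morrey embedding $W^{s,p}(r,2r)\hookrightarrow L^\infty(r,2r)$. With the standard scaling on an interval of length $r$, this produces
\[
|U(r)| \;\leq\; C\bigl(r^{-1/p}\|U\|_{L^p(r,2r)}+r^{s-1/p}[U]_{W^{s,p}(r,2r)}\bigr) \;\leq\; C\bigl(r^{-n/p}\|u\|_{L^p(A_r)}+r^{s-n/p}[u]_{s,p}\bigr).
\]
At this stage the seminorm term already carries the desired factor $r^{s-n/p}$; only the $L^p$-contribution, which scales as $r^{-n/p}$, has to be improved by a missing factor $r^{s}$. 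I would obtain this gain from the fractional Hardy inequality, which holds thanks to $sp<n$: for every $u\in W^{s,p}(\R^n)$ one has $\int_{\R^n}|u(x)|^p/|x|^{sp}\,dx\leq C[u]_{s,p}^{\,p}$. Bounding $|x|^{sp}\geq r^{sp}$ on $A_r$ yields $\|u\|_{L^p(A_r)}\leq Cr^s[u]_{s,p}$, and plugging this into the previous display collapses the two contributions to $|U(r)|\leq C r^{s-n/p}\|u\|_{W^{s,p}(\R^n)}$, valid for a.e.\ $x$ with $|x|=r\in(0,1]$.

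The step I expect to be the main technical obstacle is the angular-integration estimate that converts the $n$-dimensional Gagliardo seminorm on $A_r\times A_r$ into a weighted one-dimensional seminorm on $(r,2r)$: the spherical integral of $|\rho\omega-\sigma\eta|^{-(n+sp)}$ must be analysed separately in the regimes $|\rho-\sigma|\ll r$ and $|\rho-\sigma|\sim r$, and the correct kernel $|\rho-\sigma|^{-(1+sp)}$ with weight $r^{n-1}$ only emerges after these two contributions are carefully balanced. A secondary subtlety is the borderline case $sp=1$, for which the abstract one-dimensional embedding $W^{1/p,p}(I)\hookrightarrow L^\infty(I)$ just fails; there one must exploit the radial structure (or a small interpolation with a slightly larger $s$) to keep the argument alive, in the spirit of the original Strauss lemma.
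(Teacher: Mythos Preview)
The paper does not prove this lemma; it is quoted from \cite{SSV} without argument, so there is no in-paper proof to compare against and your outline must be judged on its own.

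For the strict inequality $sp>1$ your strategy is correct and is a recognised route to radial decay estimates. The angular integral you flag as the main obstacle is a routine computation: writing $|\rho\omega-\sigma\eta|^2=(\rho-\sigma)^2+2\rho\sigma(1-\omega\cdot\eta)$ and integrating over $S^{n-1}\times S^{n-1}$ produces exactly the one-dimensional kernel $|\rho-\sigma|^{-(1+sp)}$ with weight $r^{n-1}$ for $\rho,\sigma\in(r,2r)$. The scaled one-dimensional Morrey bound and the fractional Hardy step (valid precisely because $sp<n$) are both correct as you state them.

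The genuine gap is the endpoint $sp=1$, which the hypothesis $1\le sp$ explicitly includes. Neither of your proposed patches closes it: you cannot interpolate with a larger $s'$, since nothing guarantees $u\in W^{s',p}$; and the ``spirit of Strauss'' argument ultimately rests on the fundamental theorem of calculus applied to the radial profile, which requires $U\in W^{1,1}_{\mathrm{loc}}$ --- regularity that $W^{1/p,p}$ in one variable does not deliver. The proof in \cite{SSV} covers the full range $sp\ge 1$ by a different mechanism (atomic characterisations of the radial subspaces of Besov--Triebel--Lizorkin spaces) that absorbs the endpoint uniformly; an elementary argument at $sp=1$ would need substantially more than what you have sketched, for instance working with the weighted seminorm of $U$ on all of $(0,\infty)$ rather than a single dyadic interval, so as to recover enough oscillation control to bypass the failing endpoint embedding.
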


In the mixed case, the radial lemma can be stated as follows:
\begin{lema}\label{Lem:StraussMixed}
Let $p\geq 1$ and let $n \geq \max\{2,p\}$. Then, for every radial function $u \in X_{p,rad}(B)$, there exists a positive constant 
$C=C(n,p,\Omega)>0$ such that
\begin{equation}\label{eq:straussMixed}
|u(x)| \leq C |x|^{1-\frac{n}{p}} \|\nabla u\|_{L^{p}(B)}, \quad \textrm{for a.e. } 0<|x|\leq 1.
\end{equation}
\begin{proof}
We first notice that by the very definition of the (radial) Sobolev space $X_{p,rad}(B)$, and thanks to the regularity of $\partial B$, it actually holds that $u\in W^{1,p}(\mathbb{R}^n)$. Therefore, we can apply \cite[Lemma II.1, Remark II.3]{Lions}, finding that
\begin{equation}
|u(x)| \leq C(n,p)|x|^{(p-n)/p} \left\{ \|u\|^{(p-1)/p}_{L^{p}(\mathbb{R}^n)}\|\nabla u\|^{1/p}_{L^{p}(\mathbb{R}^n)}\right\}, \quad \textrm{for a.e. } x \in \mathbb{R}^n.
\end{equation}
In particular, the latter holds for a.e. $0<|x|\leq 1$. Now, it suffices to notice that
\begin{equation}
\begin{aligned}
\|u\|_{L^{p}(\mathbb{R}^n)} &= \|u\|_{L^{p}(B)}  \leq C(n,p,\Omega) \|\nabla u\|_{L^p(B)} \quad \textrm{(Poincar\'e ineq.)}\\
&\leq C(n,p,\Omega) \|\nabla u\|_{L^p(\mathbb{R}^n)}.
\end{aligned}
\end{equation}
Now, \eqref{eq:straussMixed} easily follows. 
\end{proof}
\end{lema}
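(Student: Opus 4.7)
The plan is to reduce the mixed-case statement directly to Lions' radial lemma for $W^{1,p}(\mathbb{R}^n)$, using the fact that elements of $X_{p,rad}(B)$ extend by zero to functions in $W^{1,p}(\mathbb{R}^n)$. The key conceptual point is that although $X_p(B)$ is defined with a nonlocal-looking ambient space $W^{1,p}(\mathbb{R}^n)$, the condition $u\equiv 0$ outside $B$ together with the smoothness of $\partial B$ means the extension preserves the gradient $L^p$-norm, so no nonlocal quantity actually appears in the decay estimate.

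First, I would observe that if $u\in X_{p,rad}(B)$, then by the very definition \eqref{eq.defSpaceXp} we have $u\in W^{1,p}(\mathbb{R}^n)$ with $u\equiv 0$ a.e.\ on $\mathbb{R}^n\setminus B$, and $u$ is radial. Thus I may apply the classical radial Sobolev inequality of Lions \cite{Lions} (equation \eqref{eq:LionsLemma} with $\delta=1$, i.e.\ $\gamma=\tfrac{n}{p}-1$), yielding for a.e. $x\in\R^n$
\begin{equation*}
|u(x)|\leq C(n,p)\,|x|^{\frac{p-n}{p}}\,\|u\|_{L^p(\R^n)}^{(p-1)/p}\,\|\nabla u\|_{L^p(\R^n)}^{1/p}.
\end{equation*}
Because $u$ and $\nabla u$ vanish outside $B$, the right-hand norms over $\mathbb{R}^n$ coincide with those over $B$.

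Next I would absorb the $L^p$ factor into the gradient factor via the Poincar\'e inequality on $B$: since $u\in W_0^{1,p}(B)$ (under the identification \eqref{eq.identifXWzero}), one has $\|u\|_{L^p(B)}\leq C(n,p,B)\,\|\nabla u\|_{L^p(B)}$. Plugging this into the previous display and combining the $(p-1)/p$ and $1/p$ exponents gives
\begin{equation*}
|u(x)|\leq C(n,p,B)\,|x|^{1-\frac{n}{p}}\,\|\nabla u\|_{L^p(B)},
\end{equation*}
which is exactly \eqref{eq:straussMixed}, valid for a.e. $0<|x|\le 1$.

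I do not anticipate a real obstacle here: the only thing to be careful about is invoking Lions' lemma in the right form (with $\delta=1$, which is admissible precisely because $n\ge \max\{2,p\}$), and checking that the zero extension outside a $C^1$ domain genuinely lands in $W^{1,p}(\R^n)$, which is standard. The rest is the Poincar\'e inequality and bookkeeping of exponents.
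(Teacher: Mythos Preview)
Your proposal is correct and follows essentially the same approach as the paper: both arguments use the definition of $X_{p,rad}(B)$ to place $u$ in $W^{1,p}(\mathbb{R}^n)$, apply Lions' radial lemma \cite[Lemma II.1, Remark II.3]{Lions} with $\delta=1$, and then absorb the $\|u\|_{L^p}$ factor into $\|\nabla u\|_{L^p(B)}$ via the Poincar\'e inequality.
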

It remains the pure local case corresponding to $\beta =0$. As for the considerations made on the space $X_p(B)$, thanks to the regularity of $\partial B$, we can {\it reverse} the identification of $X_p(B)$ with $W^{1,p}_0 (B)$ and use Lemma \ref{Lem:StraussMixed} also in the case $\beta = 0$.\\

We stress that the radial Strauss-Ni's Lemmas presented before are the key ingredients for the validity of the Palais-Smale condition.
	
\subsection{Boundedness}
We want now to discuss the boundedness of  the weak solutions of \eqref{prob}. In the case $\beta \in [0,1)$, we start with a {\em general theorem} of classical flavor whose proof is an adaptation of the classical method by Stampacchia. This approach has been already extended to the purely nonlocal setting, see e.g. the proof of Proposition~9	in~\cite{SV} and of Theorem 2.3 in~\cite{IRE}, and even to the {\it linear} mixed local-nonlocal case in \cite{BDVV}. 	Nevertheless, we will show all the details in order to make the paper self-contained. \\
	In the purely nonlocal case $\beta =1$, we refer to \cite[Theorem 3.1]{BP}. \medskip

    \begin{thm}\label{BOUNDED} 
     Assume that $n\geq 3$ and that $\beta \in [0,1)$.
	Let $f\in L^r(\Omega)$, with $1<p<n$, $r>n/p$, and 
 	assume that there exists the weak solution $u_f\in X_\beta (\Omega)$
	of
	\begin{equation} \label{eq.mainPB}
    \begin{cases}
    \LL u = f & \text{in $\Omega$}, \\
    u = 0 & \text{in } \mathbb{R}^n\setminus \Omega.
   \end{cases}
   \end{equation}
   	 Then, $u_f\in L^\infty(\mathbb{R}^n)$.

   \end{thm}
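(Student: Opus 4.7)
The plan is to carry out the classical De Giorgi--Stampacchia truncation scheme, making only the small adjustment needed to absorb the nonlocal piece. Fix $k > 0$, set $A(k) := \{x\in\Omega : u_f(x) > k\}$, and use $v := (u_f - k)_+$ as a test function in the weak formulation of \eqref{eq.mainPB}. The admissibility of $v$ in $X_\beta(\Omega)$ is routine: since $u_f$ vanishes outside $\Omega$ and $k>0$, the same holds for $v$, and composition with the Lipschitz map $t\mapsto t_+$ preserves the $W^{1,p}(\R^n)$-regularity required by the space.

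The key observation is that both pieces of $\langle \LL u_f, v\rangle$ carry the correct sign. The local term equals exactly $(1-\beta)\int_\Omega |\nabla (u_f-k)_+|^p\,dx$. For the nonlocal one, the standard pointwise inequality
\begin{equation*}
|a-b|^{p-2}(a-b)\bigl((a-k)_+ - (b-k)_+\bigr) \geq |(a-k)_+ - (b-k)_+|^p \geq 0,
\end{equation*}
obtained by checking the four sign cases of $a-k$ and $b-k$, shows that the double integral is nonnegative, so it can simply be discarded. Since $1-\beta>0$, one is left with the purely local Caccioppoli-type estimate
\begin{equation*}
(1-\beta)\int_\Omega |\nabla (u_f-k)_+|^p \, dx \leq \int_{A(k)} |f|\,(u_f-k)_+ \, dx.
\end{equation*}

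From this inequality onward the argument is identical to Stampacchia's. A H\"older with exponents $(r,r')$, followed by the Sobolev embedding $W^{1,p}_0(\Omega)\hookrightarrow L^{p^*}(\Omega)$ (permitted by $p<n$) and one further H\"older step on $A(k)$, yields $\|(u_f-k)_+\|_{L^{p^*}}^{p-1} \leq C\|f\|_{L^r(\Omega)}\,|A(k)|^{1/r' - 1/p^*}$; here the hypothesis $r>n/p$, equivalent to $r'<p^*$, is exactly what is needed both to make this H\"older step legitimate and to ensure that the exponent $\gamma := \tfrac{p^*}{p-1}\bigl(\tfrac{1}{r'}-\tfrac{1}{p^*}\bigr)$ is strictly greater than $1$. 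Combining with the Chebyshev bound $(h-k)|A(h)|^{1/p^*} \leq \|(u_f-k)_+\|_{L^{p^*}}$ for $h>k$ produces the super-linear recursion
\begin{equation*}
|A(h)| \leq \frac{C\,\|f\|_{L^r(\Omega)}^{p^*/(p-1)}}{(h-k)^{p^*}}\,|A(k)|^{\gamma}, \qquad h>k>0,
\end{equation*}
to which the classical Stampacchia iteration lemma applies, furnishing a level $k_0>0$ such that $|A(k_0)|=0$ and hence $u_f \leq k_0$ almost everywhere. Running the identical argument with $u_f$ replaced by $-u_f$ (which solves the analogous problem with datum $-f\in L^r$) gives the matching lower bound, so $u_f\in L^\infty(\R^n)$. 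The only non-mechanical point is the sign of the nonlocal contribution, but the pointwise inequality above handles it cleanly; in this sense the mixed structure presents no real obstacle, because $1-\beta>0$ lets the fractional term be dropped outright and the proof reduces to the purely local Stampacchia scheme.
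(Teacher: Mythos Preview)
Your proof is correct and follows essentially the same approach as the paper: both test with $(u_f-k)_+$, observe that the nonlocal contribution is nonnegative and can be discarded (leaving a purely local Caccioppoli inequality since $1-\beta>0$), and then iterate via Sobolev and H\"older using $r>n/p$. The only cosmetic difference is that the paper normalizes first and runs a discrete De Giorgi--type iteration with levels $C_k=1-2^{-k}$, whereas you invoke the continuous Stampacchia level-set lemma directly; one minor wording issue is that $r>n/p$ is not \emph{equivalent} to $r'<p^*$ but merely implies it (the equivalence you need and actually use is $r>n/p \Leftrightarrow \gamma>1$).
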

   
 	\begin{proof}[Proof of Theorem~\ref{BOUNDED}] 
 	We note first that the case $\beta = 0$ which corresponds to the purely local case is already well-known.\\
	Let $\delta>0$ be a positive number that we will conveniently choose later on.
	We can certainly assume that $u_f\not\equiv 0$ else there is nothing to prove. Therefore, 
    we can define the functions
	\begin{equation}\label{uASC} 
	\tilde u:=\frac{\delta^{1/p-1}\,{u}_f}
	{\|{u}_f\|_{L^{p^*}(\Omega)}+ \|f\|_{L^r(\Omega)}}
	\qquad{\mbox{and}}
	\qquad \tilde f:=\frac{\delta^{1/p-1}\,f}{\|{u}_f\|_{L^{p^*}(\Omega)}+\|f\|_{L^r(\Omega)}},
	\end{equation}
	\noindent which satisfy
	\begin{equation}\label{EQHS}
	\begin{cases}
\LL \tilde u=\tilde f & \text{in $\Omega$}, \\
	\tilde u=0 &  \text{in $\mathbb{R}^n\setminus\Omega$}.
	\end{cases}
	\end{equation}
	Now, for every $k\in\N$, we define the sequence of real numbers $C_k := 1-2^{-k}$ and the auxiliary functions
	$$v_k:=\tilde u-C_k, \quad w_k:=(v_k)_+:=\max\{v_k,0\},\quad
	U_k:= \|w_k\|_{L^{p^*}(\Omega)}^p.$$ 
	We notice that, by the Dominated
	Convergence Theorem,
	\begin{equation}\label{IJS:odf} 
	\lim_{k\to+\infty}U_k=
	\lim_{k\to+\infty}\|w_k\|^p_{L^{p^*}(\Omega)}=\|(\tilde u-1)_+\|^p_{L^{p^*}(\Omega)}.
	\end{equation}
	Also, if we take $k:=0$, we see that 
	$w_0=(v_0)_+ = (\tilde u-C_0)_+=\tilde u_+$. Now, denoting by $p^* :=\frac{np}{n-p}$ the classical Sobolev critical exponent, we get that
	\begin{equation}\label{DHAI:1}
	U_0=\left( \int_\Omega
	w_0^{p^*}(x)\, dx\right)^{p/p^*}
	\leq 
	\left( \int_\Omega \tilde u^{p^*}(x)\, dx\right)^{p/p^*}
	= \|\tilde u\|^p_{L^{p^*}(\Omega)}\le\delta^{p/p-1}.
	\end{equation}
	We will take it conveniently small in what follows.
	Now, since in $\mathbb{R}^n \setminus\Omega$ we have that 
	$v_{k+1}=-C_{k+1} \leq 0$ and thus
	$$w_{k+1}=0,$$
	we get that $w_{k+1}$ is an admissible test function, so that 
	\begin{equation}\label{EAFS1}
	\begin{split}
	& \int_\Omega |\nabla \tilde{u} |^{p-2} \langle \nabla\tilde u, \nabla w_{k+1}\rangle\, dx+
	\iint_{\mathbb{R}^2n}\!\!\!\!\!
	\frac{|\tilde u(x)-\tilde u(y)|^{p-2}(\tilde u(x)-\tilde u(y))(w_{k+1}(x)-w_{k+1}(y))}
	{|x-y|^{n+2s}}\, dx\, dy\\
	& \qquad = \int_\Omega w_{k+1}(x)\,\tilde f(x)\, dx.
	\end{split}
	\end{equation}
	We now exploit the fact that the nonlocal part has a positive sign to get rid of it: indeed, for a.e.\,$x,y\in \mathbb{R}^n$, we have (see, e.g.,~\cite[Lemma 10]{SV})
	\begin{equation}\label{EAFS2}
	 \begin{split} |w_{k+1}(x)-w_{k+1}(y)|^2
	 & = |(v_{k+1})_+(x)-(v_{k+1})_+(y)|^2 \\[0.2cm] 
	 & \leq ( (v_{k+1})_+(x)-(v_{k+1})_+(y))(v_{k+1}(x)-v_{k+1}(y)) \\[0.2cm]
	&= (w_{k+1}(x)-w_{k+1}(y))(\tilde u (x)-\tilde u(y)).
	\end{split}
	\end{equation}
 	Moreover,
	\begin{align*}
 	\int_\Omega |\nabla \tilde{u}(x)|^{p-2}\langle \nabla\tilde u(x), \nabla w_{k+1}(x)\rangle\, dx &=
	\int_{\Omega\cap\{ \tilde u>C_k\}}
	|\nabla \tilde{u}(x)|^{p-2}\langle \nabla\tilde u(x),\nabla v_{k+1}(x)\rangle \, dx \\
	&=	\int_\Omega |\nabla w_{k+1}(x)|^p\, dx.
	\end{align*}
	{F}rom this,~\eqref{EAFS1} and~\eqref{EAFS2} we can infer that
	\begin{equation*}
	\begin{split}&
	\int_\Omega |\nabla w_{k+1}(x)|^p\, dx
	\leq
	\int_\Omega w_{k+1}(x)\,\tilde f(x)\, dx.
	\end{split}
	\end{equation*}
	Hence, by Sobolev Inequality,
	\begin{equation}\label{Thsd-0}
	 U_{k+1} =
	\left(\int_\Omega|w_{k+1}(x)|^{p^*}\, dx\right)^{p/p^*}\leq 
	C\,\int_\Omega |\nabla w_{k+1}(x)|^p\, dx  \le C\,
	\int_\Omega w_{k+1}(x)\,|\tilde f(x)|\, dx,
	\end{equation}
	for some $C>0$. Also, $v_{k+1}\le v_k$ and therefore
	\begin{equation}\label{HA}
	w_{k+1}\leq w_k.
	\end{equation}
	Moreover, we observe that
	$$w_k=(\tilde u-C_k)_+ =\left(\tilde u-C_{k+1}+
	\frac1{2^{k+1}}\right)_+=
	\left(v_{k+1}+\frac1{2^{k+1}}\right)_+,$$
	and, as a result,
	\begin{equation}\label{HHA} 
	\{ w_{k+1}>0\} = \{ v_{k+1}>0\} 
	\subseteq \left\{w_k>\frac1{2^{k+1}}\right\}.
	\end{equation}
	We now introduce the following number
	\begin{equation}\label{deq} 
	 \tau := p^*\,\left(p^*-\frac{p^*}{r}-1\right)^{-1}<\frac{p^*}{p-1},
	 \end{equation}
	 \noindent in such a way that
	\begin{equation}\label{EXAO} 
	 \frac1{p^*}+\frac1r+\frac1\tau=1.
	\end{equation}
	Thanks to the lower bound for $q$, we also have that
	$$ p^*-\frac{p^*}{r}-1 > p^*-\frac{p^*}{n/p}-1
	 =\frac{np}{n-p}-\frac{p^2}{n-p}-1= p-1,$$
\noindent and it clearly follows from its very definition that
	$$ \tau> \frac{p^*}{p^*-1}>1.$$
	{F}rom this,~\eqref{HA} and~\eqref{HHA}, using H\"older inequality
	with exponents $p^*$, $r$ and $\tau$, we get that
	\begin{equation}\label{Thsd-1}
	\begin{split}
	 &\int_\Omega w_{k+1}(x)\,|\tilde f(x)|\, dx=
	\int_{\Omega\cap\{ w_{k+1}>0\}} w_{k+1}(x)\,|\tilde f(x)|\, dx \\[0.2cm] 
	&\qquad \le \|\tilde f\|_{L^{r}(\Omega)}\,\| w_{k+1}\|_{L^{p^*}(\Omega)}\,
	|\Omega\cap\{ w_{k+1}>0\}|^{1/\tau}\\[0.2cm] 
	&\qquad\le \|w_{k}\|_{L^{p^*}(\Omega)}\,
	\left|\Omega\cap\left\{w_k>\frac1{2^{k+1}}\right\}\right|^{1/\tau}\\[0.2cm] 
	&\qquad\le U_k^{1/p}\,\left( 2^{p^*(k+1)}
	\int_{\Omega\cap\left\{w_k>\frac1{2^{k+1}}\right\}} w_k^{p^*}\right)^{1/\tau} \\[0.2cm] 
	&\qquad\le \tilde C^k\,U_k^{1/p}\,U_k^{p^*/(p\tau)},
	\end{split}
	\end{equation}
	where $\tilde C>1$. 
	We now define
	$$\gamma:=\frac{1}{p} +\frac{p^*}{\tau p},$$
	and, thanks to~\eqref{deq}, we can easily notice that
	\begin{equation}
	\gamma>1,
	\end{equation}
	 By~\eqref{Thsd-0}
	and~\eqref{Thsd-1}, we can notice that
	$$U_{k+1}\le \hat C^k\;U_k^\gamma,$$
	for some $\hat C>1$.
	As a result, recalling~\eqref{DHAI:1} and keeping in mind that $\delta>0$ can be taken
	sufficiently small, we conclude that
	$$ \lim_{k\to+\infty} U_k=0.$$
	This and~\eqref{IJS:odf} give that
	$$\text{$\|(\tilde u-1)_+\|^p_{L^{p^*}(\Omega)}=0$},$$
	and therefore $\tilde u\leq 1$. As a consequence, recalling~\eqref{uASC}, for every $x\in\Omega$,
	\begin{equation}\label{3HY} 
	 {u}_f(x)\le \frac{\|{u}_f\|_{L^{p^*}(\Omega)}+\|f \|_{L^{r}(\Omega)}}{\delta},
	\end{equation}
	\noindent and this closes the proof.
\end{proof}

\begin{cor} \label{coro.bound}
Assume that $\beta \in [0,1)$, $0<s<1$, $1<p<n$ and let $u$ be a weak solution of \eqref{prob}. Then
\begin{equation}\label{eq:alphaBound}
u \in L^{\infty}(\mathbb{R}^n) \quad \textrm{for every } \alpha > \max\left\{0, (q-1)\left(\dfrac{n}{p}-1\right)-p\right\}.
\end{equation}

If $\beta=1$, $0<s<1$ is such that $1<sp<n$ and $u$ is a weak solution of \eqref{prob}, then
\begin{equation}\label{eq:alphaBound.1}
u \in L^{\infty}(\mathbb{R}^n) \quad \textrm{for every } \alpha > \max\left\{0, (q-1)\left(\dfrac{n}{p}-s\right)-sp\right\}.
\end{equation}

\begin{proof}
First we notice that if 
\begin{equation}\label{eq:estimater}
r \left( \alpha + \left(1 -\dfrac{n}{p}\right)(q-1)\right) +n >0,
\end{equation}
\noindent then 
$$|x|^{\alpha} u^{q-1} \in L^{r}(B).$$
This follows noticing that, thanks to Lemma \ref{Lem:StraussMixed}, it holds that
\begin{equation*}
\int_{B}|x|^{\alpha r} u^{r(q-1)}\, dx \leq C \int_{B}|x|^{\alpha r}|x|^{(q-1)r(p-n)/p}\, dx.
\end{equation*}
Now, it is clear that if
\begin{equation}\label{eq:alpha1}
\alpha \geq \left(\dfrac{n}{p}-1 \right)(q-1),
\end{equation}
\noindent then \eqref{eq:estimater} is trivially satisfied for every $r>0$, in particular for $r > \tfrac{n}{p}$. Therefore the conclusion follows from Theorem \ref{BOUNDED}.\\
On the other hand, if 
\begin{equation}\label{eq:alpha2}
\alpha < \left(\dfrac{n}{p}-1 \right)(q-1),
\end{equation}
it is enough to take
$$r = \dfrac{-n}{(q-1)\left(1-\dfrac{n}{p}\right) +\alpha},$$
\noindent and one can easily recognize that such $r > \tfrac{n}{p}$ {\it if and only if}
\begin{equation}\label{eq:alpha3}
\alpha > \left(\dfrac{n}{p}-1 \right)(q-1)-p.
\end{equation}
Combining \eqref{eq:alpha1}, \eqref{eq:alpha2} and \eqref{eq:alpha3} we immediately get \eqref{eq:alphaBound}.
An analogous argument by using Lemma \ref{strauss-ni} and \cite[Theorem 3.1]{BP} gives the result for $\beta=1$.

\end{proof}
\end{cor}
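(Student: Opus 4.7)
The plan is to reduce the corollary to the $L^\infty$ results already at hand, namely Theorem \ref{BOUNDED} when $\beta\in[0,1)$ and \cite[Theorem 3.1]{BP} when $\beta=1$, both of which grant $u\in L^\infty(\mathbb{R}^n)$ once the right-hand side $f:=|x|^\alpha u^{q-1}$ lies in $L^r(B)$ for some $r>n/p$. So the whole task is to determine the range of $\alpha$ for which such an $r$ exists, and the proof becomes an exercise in bookkeeping around a Strauss--Ni-type pointwise bound.

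First I would insert the radial pointwise bounds at our disposal. For $\beta\in[0,1)$, Lemma \ref{Lem:StraussMixed} gives $|u(x)|\leq C|x|^{1-n/p}\,\|\nabla u\|_{L^p(B)}$; for $\beta=1$, Lemma \ref{strauss-ni} gives $|u(x)|\leq C|x|^{s-n/p}\,\|u\|_{W^{s,p}(\mathbb{R}^n)}$. Plugging such a bound in reduces the question to when
$$
\int_B |x|^{\,r\alpha\,+\,r(q-1)(\sigma-n/p)}\,dx<+\infty,
$$
with $\sigma=1$ in the mixed/local case and $\sigma=s$ in the pure nonlocal case. In polar coordinates this is equivalent to the scalar inequality $r\bigl(\alpha+(\sigma-n/p)(q-1)\bigr)+n>0$, which is precisely \eqref{eq:estimater} when $\sigma=1$.

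Next I would split according to the sign of $\alpha+(\sigma-n/p)(q-1)$. If $\alpha\geq(n/p-\sigma)(q-1)$, the exponent of $|x|$ is nonnegative and the integral converges for every $r>0$; in particular for some $r>n/p$, so no further restriction on $\alpha$ appears beyond $\alpha>0$. If instead $\alpha<(n/p-\sigma)(q-1)$, the constraint forces $r<n/\bigl[(n/p-\sigma)(q-1)-\alpha\bigr]$, and its intersection with $(n/p,+\infty)$ is nonempty exactly when the upper bound exceeds $n/p$, namely when $\alpha>(n/p-\sigma)(q-1)-\sigma p$. Taking the max of the two thresholds produces \eqref{eq:alphaBound} with $\sigma=1$ and \eqref{eq:alphaBound.1} with $\sigma=s$.

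The only real obstacle is making sure the chosen $r$ simultaneously satisfies the integrability inequality and the hypothesis $r>n/p$ demanded by Theorem \ref{BOUNDED}; everything else is an elementary computation. In particular, one should check that the Strauss--Ni bound indeed applies to the solutions under consideration (this is fine because the corollary is paired with the existence result of Theorem \ref{teo1}, which delivers radial solutions in $X_{\beta,rad}(B)$). Once this radial decay estimate is accepted, the heavy analytical work has been done upstream in Lemmas \ref{strauss-ni}--\ref{Lem:StraussMixed} and in Theorem \ref{BOUNDED}, and the corollary follows.
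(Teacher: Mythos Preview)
Your proposal is correct and follows essentially the same route as the paper: use the radial Strauss--Ni bound to reduce the question to integrability of a power of $|x|$, translate this into the scalar condition \eqref{eq:estimater}, split on the sign of $\alpha-(n/p-\sigma)(q-1)$, and feed the resulting $r$ into Theorem~\ref{BOUNDED} (resp.\ \cite[Theorem~3.1]{BP}). One small slip: for $\beta=1$ the boundedness result of Brasco--Parini requires $r>n/(sp)$, not $r>n/p$; with that correction your computation in the second case indeed yields $\alpha>(n/p-s)(q-1)-sp$ as stated, so your final thresholds are right even though the intermediate ``intersection with $(n/p,+\infty)$'' should read $(n/(sp),+\infty)$ in the purely nonlocal case.
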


\begin{rem} \label{rem.bound}
It is clear that the maximum found in \eqref{eq:alphaBound} in the case $\beta\in [0,1)$ imposes a {\it new} relation between $p$, $q$ and $n$. By quite simple computations one can find bounds on $q$ in terms of $p$ and $n$ for which that maximum is actually $0$. This happens e.g. if $n=2,3,4,5$ and $q < \tfrac{p^2 +n -p}{n-p}$. For $n\geq 6$ one has more restrictions on $p$.
On the other hand, in the case $\beta=1$, we find that $u\in L^\infty(\R^n)$ for every $\alpha>0$ if we restrict the range of $0<s<\frac{n}{p}$ as
$$
0<s<  \frac{n}{p}\frac{q-1}{p+q-1}.
$$
\end{rem}

Finally, we recall the following well-known compactness result:

\begin{lema}\label{lemma.conv}
  Let $\Omega \subset \mathbb{R}^n$ be a bounded domain, let $p>1$ and let $\{u_n\}_{n\in\N}$ be a bounded sequence in
  $L^p(\Omega)$ such that $\{u_n\}_{n\in\N}$ converges to $u$
  a.e. Then $u\in L^p(\Omega)$ and $u_n\to u$ in
	$L^r(\Omega)$ for $r\in [1,p)$.  
\end{lema}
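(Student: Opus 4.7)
The plan is a classical two-step argument: first I would show that $u\in L^p(\Omega)$ using Fatou's lemma, and then upgrade a.e.\ convergence to $L^r$ convergence for $r\in[1,p)$ via Vitali's convergence theorem. Both steps rely essentially on the finiteness of $|\Omega|$ together with the $L^p$-boundedness of the sequence.

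For the first step, set $M := \sup_{n\in\N}\|u_n\|_{L^p(\Omega)}<+\infty$. Since $|u_n|^p\ge 0$ and $|u_n|^p\to |u|^p$ a.e., Fatou's lemma gives
\[
\int_\Omega |u|^p\,dx \le \liminf_{n\to\infty}\int_\Omega |u_n|^p\,dx \le M^p,
\]
so $u\in L^p(\Omega)$ and in particular $\|u_n-u\|_{L^p(\Omega)}\le 2M$ for every $n$.

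For the second step, fix $r\in[1,p)$ and note $p/r>1$. For any measurable $E\subseteq\Omega$, H\"older's inequality with exponents $p/r$ and its conjugate yields
\[
\int_E |u_n-u|^r\,dx \le |E|^{\,1-r/p}\,\|u_n-u\|_{L^p(\Omega)}^{r} \le (2M)^r\,|E|^{\,1-r/p},
\]
which shows that the family $\{|u_n-u|^r\}_{n\in\N}$ is uniformly integrable on $\Omega$. Since $|\Omega|<+\infty$, $u_n-u\to 0$ a.e., and the sequence is uniformly integrable, Vitali's convergence theorem applies and yields $u_n\to u$ in $L^r(\Omega)$.

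I do not anticipate any real technical obstacle: the statement is a standard consequence of the interplay between Fatou and Vitali. The only point worth emphasizing in the write-up is that the boundedness hypothesis on $\Omega$ is genuinely needed, since otherwise mass could escape to infinity and the conclusion would fail even with a uniform $L^p$ bound and a.e.\ convergence; and that the restriction $r<p$ is sharp in general, as the bound on $|E|^{1-r/p}$ is no longer useful at $r=p$.
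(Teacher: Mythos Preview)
Your argument is correct and entirely standard: Fatou gives $u\in L^p(\Omega)$, and the H\"older estimate on small sets yields uniform integrability of $\{|u_n-u|^r\}_{n\in\N}$ for $r<p$, so Vitali's theorem closes the proof. The paper itself does not prove this lemma; it is simply recorded as a well-known compactness fact, so there is no approach to compare against. Your write-up would serve perfectly well as the omitted justification.
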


\section{Existence and non-existence results}\label{sec.existence}

The proof of the existence result follows by using the  mountain pass lemma (Proposition \ref{mountain}) due to Ambrosetti and Rabinowitz.

\begin{lema}\label{lema.comp}
Let $p>1$ and $\alpha>0$. Then the following holds:
\begin{itemize}
\item[(i)] \emph{(case $\beta=1$)} if $s\in (0,1)$ and $n > sp$, then
the inclusion $W^{s,p}_{rad}(\R^n)\subset L^r(|x|^\alpha,B)$
is compact provided that
  \begin{equation}\label{r-cond}
    1\le r <
    \begin{cases}
      \frac{np}{n-sp-\alpha p}& \text{if } \alpha<\frac{n-sp}{p}, \\[1\jot]
      \infty & \text{if } \alpha\ge \frac{n-sp}{p}.
    \end{cases}
  \end{equation}
  \item[(ii)] \emph{(case $\beta \in [0,1)$)} if $n \geq \max\{2,p\}$, then the inclusion $W^{1,p}_{rad}(\R^n)\subset L^r(|x|^\alpha,B)$
is compact provided that
  \begin{equation}\label{r-cond2}
    1\le r <
    \begin{cases}
      \frac{np}{n-p-\alpha p}& \text{if } \alpha<\frac{n-p}{p}, \\[1\jot]
      \infty & \text{if } \alpha\ge \frac{n-p}{p}.
    \end{cases}
  \end{equation}
\end{itemize}

\end{lema}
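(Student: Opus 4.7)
The plan is to follow the standard Strauss--Lions scheme: exploit the pointwise radial decay via the appropriate Strauss--type lemma already established in the excerpt, combine it with unweighted Rellich--Kondrachov compactness on $B$, and close by dominated convergence.

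I would start with a bounded sequence $\{u_k\}$ in $W^{s,p}_{rad}(\R^n)$ in case (i), respectively in $W^{1,p}_{rad}(\R^n)$ in case (ii). Reflexivity of these Sobolev spaces yields, up to a subsequence, a weak limit $u$. The assumption $sp<n$ (resp.\ $p<n$) lets me invoke the classical Rellich--Kondrachov compact embedding (in its fractional form for case (i)) to conclude that $u_k \to u$ strongly in $L^p(B)$, and hence along a further subsequence $u_k \to u$ pointwise almost everywhere in $B$.

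Next I would apply Lemma \ref{strauss-ni} in case (i) and Lemma \ref{Lem:StraussMixed} in case (ii) to the uniformly bounded family $\{u_k\}$ to get a uniform pointwise bound
$$|u_k(x)| \leq C\,|x|^{\sigma - n/p} \qquad \text{for a.e.\ } x \in B\setminus\{0\},$$
where $\sigma = s$ in case (i), $\sigma = 1$ in case (ii), and $C$ is independent of $k$. Fatou's lemma transfers the same bound to the limit $u$, so that $|x|^{\alpha r}|u_k(x)-u(x)|^r$ is pointwise dominated by $2^r C^r \,|x|^{\alpha r + r(\sigma - n/p)}$.

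The dominating function is integrable on $B$ precisely when $\alpha r + r(\sigma - n/p) + n > 0$, whose elementary rearrangement reproduces the stated ranges \eqref{r-cond} and \eqref{r-cond2}; in the complementary regime $\alpha \geq (n-\sigma p)/p$ the exponent is non-negative, so integrability on $B$ is automatic for every $r\geq 1$. The Lebesgue dominated convergence theorem then gives $\|u_k - u\|_{L^r(|x|^\alpha, B)} \to 0$, which is the desired compactness. I do not foresee a genuine obstacle here: the three ingredients (reflexivity, unweighted Rellich--Kondrachov on a bounded domain, and the Strauss--Ni pointwise bound) are all in place, and what remains is only the algebraic bookkeeping of the exponent in the dominating function, together with the choice of the correct Strauss--type lemma in each case.
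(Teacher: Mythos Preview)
Your proof is correct and follows essentially the same route as the paper: the Strauss--Ni pointwise decay (Lemmas~\ref{strauss-ni} and~\ref{Lem:StraussMixed}), Rellich--Kondrachov on $B$ for a.e.\ convergence, and then passage to the limit in the weighted norm under the exponent condition $r(\alpha+\sigma-n/p)+n>0$. The only variation is in the final step: you invoke dominated convergence directly with the explicit majorant $|x|^{\alpha r + r(\sigma - n/p)}$, whereas the paper first records boundedness of $\{|x|^\alpha u_k\}$ in $L^r(B)$ via the same Strauss bound and then appeals to Lemma~\ref{lemma.conv}; your argument is slightly more direct but the ingredients and the exponent bookkeeping are identical.
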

 
\begin{proof}
We start proving (i), hence let $u\in W^{s,p}_{rad}(\R^n)$. By virtue of  Lemma~\ref{strauss-ni}, we have 
\begin{align} \label{eqr1}
\begin{split}
\int_B |x|^{\alpha r} |u|^r \,dx &\leq c\|u\|_{W^{s,p}(\R^n)}^r \int_B |x|^{\alpha r + sr-\frac{rn}{p}}\,dx\\
&= 
n\omega_n c\|u\|_{W^{s,p}(\R^n)}^r \int_0^1 \rho^{r(\alpha  + s-\frac{n}{p})+n-1}\,d\rho\\
&=
n\omega_n c\|u\|_{W^{s,p}(\R^n)}^r \frac{1}{r(\alpha  + s-\frac{n}{p})+n}
\end{split}
\end{align}  
provided that $ n+ r(\alpha  + s-\frac{n}{p})>0$.

Now, let $\{u_n\}_{n\in\N}$ be a bounded sequence in $W^{s,p}_{rad}(\R^n)$. Up to a subsequence,  by the compact embedding theorem for fractional Sobolev spaces \cite[Corollary 7.2]{DRV}, 
\begin{align*}
&u_n \to u \text{ strongly in } L^m(B) \text{ for all } m<p^*_s\\
&u_n \to u \text{ a.e. in }B. 
\end{align*} 
Therefore,  
$$
|x|^\alpha u_n\to |x|^\alpha u  \text{ a.e. in } B.
$$
By estimate \eqref{eqr1} the sequence $\{|x|^\alpha u_n\}_{n\in\N}$ is bounded in $L^r(B)$ for all $r$ satisfying   \eqref{r-cond}.
In turn, in light  of Lemma \ref{lemma.conv}, it follows that  
$$
|x|^\alpha u_n \to  |x|^\alpha  u \text{ strongly in } L^q(B)
$$ 
for all $q <r$, and then the lemma follows.\\
The proof of (ii) runs in an analogous way, so we omit the details.
\end{proof}

Let us check that $\J$ satisfies the Palais-Smale condition.

\begin{lema} \label{l.e.2}
	Let $p<q<p^*_{\beta,\alpha}$. For $\beta \in [0,1]$ the functional $\J$ defined in \eqref{func.J} satisfies the conditions
  \begin{itemize}
  \item[(i)] $\J(0)=0$ and $\J(v) \le 0$ for some $v\ne 0$ in $X_{\beta,rad}(B)$,
  \item[(ii)] there exists $\mu\in \bigl(0,\|v\|_{\beta}\bigr)$ and
    $\sigma>0$ such that $\J\geq \sigma$ on
    $S_\mu := \{u\in X_{\beta,rad}(B) \, : \, \|u\|_{\beta} = \mu \}$.
  \end{itemize}
\end{lema}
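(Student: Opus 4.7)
The statement packages the two geometric ingredients of the Mountain Pass Lemma, both of which reduce to the fact that the $p$-homogeneous elastic part of $\J$ dominates the $q$-homogeneous potential at the origin and is dominated by it at infinity, once the subcritical condition $q<p^*_{\beta,\alpha}$ supplies a continuous (radial) embedding into $L^q(|x|^\alpha,B)$.

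For (i), it is immediate that $\J(0)=0$. To produce the nontrivial $v$, pick any nontrivial nonnegative radial $v_0\in C_c^\infty(B)$, so that $\int_B|x|^\alpha v_0^q\,dx>0$, and compute
\[
\J(tv_0)=\frac{t^p}{p}\bigl[(1-\beta)\|\nabla v_0\|_{L^p(B)}^p+\beta[v_0]_{s,p}^p\bigr]-\frac{t^q}{q}\int_B|x|^\alpha v_0^q\,dx,\qquad t>0.
\]
Since $q>p$, the $q$-term eventually dominates and $\J(tv_0)\to-\infty$ as $t\to+\infty$. Choose $t_\star>0$ large enough that $\J(t_\star v_0)\le 0$ and, simultaneously, $\|t_\star v_0\|_\beta>\mu$ with $\mu$ the (small) radius produced in (ii); set $v:=t_\star v_0$.

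For (ii), I would invoke the radial Strauss--Ni bounds of Subsection \ref{sec:radialLemma} (Lemma \ref{Lem:StraussMixed} when $\beta\in[0,1)$, Lemma \ref{strauss-ni} when $\beta=1$), together with Lemma \ref{lema.comp}, observing that $q<p^*_{\beta,\alpha}$ is exactly the integrability threshold dictated by the pointwise decay of radial functions at the origin. This yields a constant $C=C(n,p,s,q,\alpha,\beta)>0$ with
\[
\int_B|x|^\alpha(u^+)^q\,dx\le C\|u\|_\beta^q\qquad\text{for every }u\in X_{\beta,rad}(B).
\]
Dropping the Gagliardo seminorm in the definition of $\J$ when $\beta\in[0,1)$ (or, symmetrically, noting that the gradient term is absent when $\beta=1$), one arrives at
\[
\J(u)\ge\frac{c_\beta}{p}\|u\|_\beta^p-\frac{C}{q}\|u\|_\beta^q=\|u\|_\beta^p\left(\frac{c_\beta}{p}-\frac{C}{q}\|u\|_\beta^{\,q-p}\right),
\]
with $c_\beta:=1-\beta$ if $\beta\in[0,1)$ and $c_1:=1$; in either case $c_\beta>0$. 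Because $q>p$, the function $\mu\mapsto\frac{c_\beta}{p}\mu^p-\frac{C}{q}\mu^q$ is strictly positive on a right neighbourhood of $0$. Fix $\mu>0$ in that neighbourhood, further shrinking it so that $\mu<\|v\|_\beta$ (which is possible by choosing $t_\star$ in (i) a posteriori large enough), and set $\sigma:=\frac{c_\beta}{p}\mu^p-\frac{C}{q}\mu^q>0$.

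The only non-formal step is the subcritical bound $\int_B|x|^\alpha(u^+)^q\,dx\le C\|u\|_\beta^q$: this is precisely where the exponent $p^*_{\beta,\alpha}$ plays its role, and where the radial reduction is essential to kill the otherwise non-integrable singularity at the origin when $\alpha$ is small. With that estimate at hand, everything else is elementary, and the two conditions (i) and (ii) follow as described.
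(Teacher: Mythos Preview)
Your proposal is correct and follows essentially the same route as the paper: for (i) you scale a fixed nonnegative radial test function and use $q>p$, and for (ii) you use the radial Strauss--Ni bounds to obtain the subcritical estimate $\int_B|x|^\alpha(u^+)^q\,dx\le C\|u\|_\beta^q$ and then exploit $q>p$ to get positivity on a small sphere. Two minor remarks: the reference to Lemma~\ref{lema.comp} is superfluous here, since only the continuous embedding (which follows directly from the pointwise radial bound and the condition $q<p^*_{\beta,\alpha}$, as in the paper's computation \eqref{eqq.1}) is needed, not compactness; and your explicit bookkeeping of the constant $c_\beta=1-\beta$ in the mixed case is slightly more careful than the paper's own statement, which glosses over this coefficient.
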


\begin{proof}	
Let us check (i). Obviously $\J(0)=0$. Let $u_0\in X_{\beta,rad}(B)$ be a positive function such that $\|u_0\|_{\beta}=1$. Then, since $q>p$, we have that there exists a positive constant $C>0$ such that
\begin{align}\label{eq.forStability}
\J(tu_0)\le (1+C) \frac{t^p}{p} - \frac{t^q}{q}\int_{\Omega_\rho}  |x|^\alpha (u^+_0)^{q} \,d x  \leq 0
\end{align}	
for $t>0$ large enough. The constant $C$ can be taken equal to zero in the cases $\beta = 0$ or $\beta =1$.

Let us check (ii). First, we consider the pure nonlcal case $\beta =1$. We observe that $\J$ is well defined since for $x\in B$, by Lemma \ref{strauss-ni}
\begin{equation} \label{eqq.1}
\begin{split}
\int_B  |x|^\alpha (u^+)^{q}  \,dx &\leq c [u]_{s,p}^q \int_B |x|^{\alpha+sq-\frac{nq}{p}}\,dx\\
&=c [u]^q_{s,p}\int_0^1 r^{\alpha+q(s-n/p)+n-1}\,dr\\
&\leq c [u]^q_{s,p}
\end{split}
\end{equation}
for $q<p^*_{\beta,\alpha}$. 
Since $q>p$, from \eqref{eqq.1} we get  that
\begin{equation}\label{eqq.2}
\J(u)\geq \frac{1}{p}[u]_{s,p}^p-	c [u]_{s,p}^q =\frac{\mu^p}{p} - c \mu^q=\sigma>0
\end{equation}
if $\left[u\right]_{s,p}=\mu$ for $\mu>0$ small enough. \\
The case $\beta \in [0,1)$ works in a similar way replacing $[u]_{s,p}$ with $\|\nabla u\|_{L^{p}(B)}$ and taking $s=1$. This closes the proof.
\end{proof}

\begin{rem}\label{ToPassToTheLimit.2}
We note that the function $v$ in (i) is given by $tu_0$ in \eqref{eq.forStability}. Since $t$ can be taken {\it large enough} and $u_0$ has been normalized, we can infer that $\|t u_0\|_{\beta}$ does not depend on $s$ and therefore the Mountain Pass solution found in (ii) has a norm which is uniformly bounded (in $s$).
\end{rem}


\begin{lema}\label{2ps} 
Let $p<q<p^*_{\beta,\alpha}$.
 Then, the functional  $\J$ satisfies the Palais-Smale condition  for every $\beta\in [0,1]$
\end{lema}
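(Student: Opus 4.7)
The plan is to verify the Palais--Smale condition in three steps: (1) show boundedness of any PS-sequence in $X_{\beta,rad}(B)$; (2) extract a weakly convergent subsequence that converges strongly in the weighted Lebesgue space $L^{q}(|x|^{\alpha},B)$; (3) upgrade to strong convergence in $X_{\beta,rad}(B)$ by invoking the $(\mathbf{S})$-property recorded in Remark \ref{cumple.s.prop}.

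For step (1), let $\{u_n\}\subset X_{\beta,rad}(B)$ satisfy $\J(u_n)\to c$ and $\J'(u_n)\to 0$ in $X_{\beta,rad}^*(B)$. Using the classical Ambrosetti--Rabinowitz trick and the identity
\[
\J(u_n)-\tfrac{1}{q}\langle \J'(u_n),u_n\rangle = \left(\tfrac{1}{p}-\tfrac{1}{q}\right)\bigl[(1-\beta)\|\nabla u_n\|_{L^{p}(B)}^{p}+\beta\,[u_n]_{s,p}^{p}\bigr],
\]
together with $q>p$ and $\|\J'(u_n)\|_{X_\beta^*}\,\|u_n\|_\beta = o(\|u_n\|_\beta)$, one obtains a uniform bound on $\|u_n\|_\beta$. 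In the mixed case $\beta\in(0,1)$ both terms on the right are nonnegative, so the bound on $\|u_n\|_\beta$ follows directly.

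For step (2), reflexivity of $X_{\beta,rad}(B)$ (see the properties listed after \eqref{eq.defSpaceXp}) yields, up to subsequences, $u_n\rightharpoonup u$ weakly in $X_{\beta,rad}(B)$. Since $p<q<p^*_{\beta,\alpha}$, Lemma \ref{lema.comp} provides the compact embedding $X_{\beta,rad}(B)\hookrightarrow L^{q}(|x|^{\alpha},B)$, so that $u_n\to u$ strongly in this weighted Lebesgue space, and in particular $u_n^+\to u^+$ there as well.

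For step (3), from the relation \eqref{relation} we have
\[
\langle \LL u_n,u_n-u\rangle = \langle \J'(u_n),u_n-u\rangle + \int_B |x|^\alpha (u_n^+)^{q-1}(u_n-u)\,dx.
\]
The first term on the right tends to zero because $\J'(u_n)\to 0$ in $X_\beta^*(B)$ and $\{u_n-u\}$ is bounded in $X_{\beta,rad}(B)$. For the second, H\"older's inequality with exponents $q/(q-1)$ and $q$ in the weighted space yields
\[
\left|\int_B |x|^\alpha (u_n^+)^{q-1}(u_n-u)\,dx\right| \le \|u_n^+\|_{L^{q}(|x|^{\alpha},B)}^{q-1}\,\|u_n-u\|_{L^{q}(|x|^{\alpha},B)},
\]
and the right-hand side vanishes in the limit by step (2). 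On the other hand, $\LL u\in X_\beta^*(B)$ and $u_n-u\rightharpoonup 0$ give $\langle \LL u,u_n-u\rangle\to 0$. Hence $\langle \LL u_n-\LL u,u_n-u\rangle\to 0$, and the $(\mathbf{S})$-property of $\LL$ from Remark \ref{cumple.s.prop} forces $u_n\to u$ strongly in $X_{\beta,rad}(B)$, completing the verification of the Palais--Smale condition.

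The only nontrivial point is that the compactness in the weighted space is exactly tailored to the critical threshold $p^*_{\beta,\alpha}$, so the hypothesis $q<p^*_{\beta,\alpha}$ is used in a non-negotiable way via Lemma \ref{lema.comp}; once this is available, the rest reduces to a standard application of the $(\mathbf{S})$-property that unifies the three regimes $\beta=0$, $\beta\in(0,1)$, and $\beta=1$.
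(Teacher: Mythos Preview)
Your proof is correct and follows the same three-step strategy as the paper: boundedness via the Ambrosetti--Rabinowitz combination $\J(u_n)-\tfrac{1}{q}\langle\J'(u_n),u_n\rangle$, weighted compactness from Lemma~\ref{lema.comp}, and strong convergence via the $(\mathbf{S})$-property of Remark~\ref{cumple.s.prop}. The only cosmetic difference is in the treatment of the nonlinear term: the paper shows $|x|^\alpha(u_n^+)^{q-1}\to|x|^\alpha(u^+)^{q-1}$ strongly in $L^{(p^*_\beta)'}(B)$ and pairs this with $u_n-u\rightharpoonup 0$ in $L^{p^*_\beta}(B)$, whereas you apply H\"older directly in $L^q(|x|^\alpha\,dx,B)$; just be aware that with the paper's convention for $L^r(|x|^\alpha,B)$ (meaning $|x|^\alpha u\in L^r$, cf.\ the proof of Lemma~\ref{lema.comp}), your invocation of that lemma should take weight exponent $\alpha/q$, which still delivers precisely the threshold $q<p^*_{\beta,\alpha}$.
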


\begin{proof}
Let us first prove that  every Palais-Smale sequence $\{u_n\}_{n\in\N}\subset X_{\beta,rad}(B)$ for $\J$ is   bounded. Since $\J'(u_n)\to 0$,
  \begin{equation} \label{ecl.1}
  \begin{aligned}
    |\langle \J'(u_n),u_n\rangle|
   & = \left| (1-\beta) \|\nabla u_n\|^{p}_{L^{p}(B)}+ \beta \left[u_n\right]_{s,p}^p
      - \int_{B} |x|^\alpha (u_n^+)^{q}\,d x \right|  \le \|u_n\|_{\beta},
      \end{aligned}
  \end{equation}
for $n$ large enough. The condition $|\J(u_n)|\leq C$ is equivalent to 
  \begin{equation} \label{ecl.2}
    \left|\frac{1-\beta}{p}\|\nabla u_n\|^{p}_{L^{p}(B)}+ \frac{\beta}{p} \left[u_n\right]_{s,p}^p
      -	\frac{1}{q}\int_{B} |x|^\alpha (u_n^+)^{q} \,d x \right|
    \le C.
  \end{equation}
From \eqref{ecl.1} and \eqref{ecl.2} it follows that
  \begin{align*}
   (1-\beta)\|\nabla u_n\|_{L^p(B)}^p + \beta [u_n]_{s,p}^p 
    &\le pC +\frac{p}{q}  \int_B |x|^\alpha (u_n^+)^q \, dx \\
    &\le pC  + \frac{p}{q}\|u_n\|_\beta + \frac{(1-\beta)p}{q} \|\nabla u_n\|_{L^p(\R^n)}^p  + \frac{\beta p}{q} \left[u_n\right]_{s,p}^p
  \end{align*}
from where, since $p<q$, we get that
  \begin{equation*}
    \left(1-\frac{p}{q}\right) ((1-\beta)\|\nabla u_n\|_{L^p(B)}^p + \beta [u_n]_{s,p}^p )
    \le pC + \frac{p}{q} \|u_n\|_{\beta}.
  \end{equation*}
In particular, when $\beta=1$ this gives that
 \begin{equation*}
    \left(1-\frac{p}{q}\right) [u_n]_{s,p}^p 
    \le pC + \frac{p}{q} [u_n]_{s,p},
  \end{equation*}  
and when $\beta \in [0,1)$, 
 \begin{equation*}
    \left(1-\frac{p}{q}\right)  \|\nabla u_n\|_{L^p(B)}^p
    \le pC + \frac{p}{q} \|\nabla u_n\|_{L^p(B)}.
  \end{equation*}  
Hence, for $\beta\in [0,1]$ the sequence $\{u_n\}_{n\in\N}$ is bounded in $X_{\beta,rad}(B)$.
Passing now to a subsequence, we have $u_n\rightharpoonup u$ in $X_{\beta,rad}(B)$. Let us first prove that
\begin{equation}
  \label{lcom}
  \lim_{n\to\infty}\int_B |x|^\alpha (u_n^+)^{q-1}(u_n-u) \,d x =0.
\end{equation}
Up to a further subsequence, the sequence $\{u_n-u\}_{n\in\N}$ converges weakly to zero in $L^{p^*_\beta}(B)$. Then, in order to get \eqref{lcom} we have to prove that $|x|^\alpha u_n^{q-1}\to |x|^\alpha u^{q-1}$ strongly in $L^{(p^*_\beta)'}$, where
\begin{equation}
\left(p^{*}_{\beta}\right)' = \left\{ \begin{array}{rl}
\frac{np}{np+p-n} & \textrm{ if } \beta \in [0,1),\\
\frac{np}{np+sp-n} & \textrm{ if } \beta = 1.
\end{array}\right.
\end{equation}  
 This is equivalent to prove that
$$
|x|^{\alpha/(q-1)} u_n  \to |x|^{\alpha/(q-1)}u  \text{ strongly in }
L^{(q-1)(p^{\ast}_{\beta})'}(B).
$$
By using Lemma \ref{lema.comp}, this is actually true whenever
\begin{equation}
(q-1)(p^{\ast}_{\beta})' < \left\{ \begin{array}{rl}
\frac{np}{n-p-\frac{\alpha p}{q-1}} & \textrm{ if } \beta \in [0,1),\\
\frac{np}{n-sp-\frac{\alpha p}{q-1}} & \textrm{ if } \beta =1,
\end{array}\right.
\end{equation}
that is, when
$$
q< p^*_{\beta,\alpha}.
$$

Therefore, from \eqref{relation}, we have for all $n\in\N$,
\begin{align*}
|\langle \LL(u_n),u_n-u\rangle|
&= \Bigl| \langle \J'(u_n),u_n-u\rangle
    + \int_B |x|^\alpha (u_n^+)^{q-1}(u_n-u) \,d x \Bigr| \\
  &\le \|\J'(u_n)\|_{X^*_{\beta}(B)}\|u_n-u\|_{\beta}
    +\int_B |x|^\alpha (u_n^+)^{q-1}(u_n-u)\,d x
\end{align*}
and the latter tends to $0$ as $n\to\infty$ by virtue of \eqref{lcom}. 

Finally, since by Remark \ref{cumple.s.prop}, $\LL$ fulfills the  $({\bf S})$-property of compactness, due to Proposition \ref{prop-s} the previous computations give that $u_n\to u$ strongly in $X_{\beta,rad}(B)$, which concludes the proof.
\end{proof}

We are now in position to prove our existence result.

\begin{proof}[Proof of Theorem \ref{teo1}]
From Lemmas \ref{l.e.2} and \ref{2ps} we are in position to apply the Mountain pass Theorem stated in Proposition \ref{mountain}. Indeed, when $1<p<q<p^*_{\beta,\alpha}$ there is a function $u\in X_{\beta,rad}(B)$ which is a critical point of $\mathcal{J}$ and hence is a non-trivial weak solution of \eqref{prob}. Moreover, Corollary \ref{coro.bound} and Remark \ref{rem.bound} ensure  that $u\in L^\infty(B)$ under our assumptions of the parameters.
\end{proof}

As already mentioned, the non-existence result follows from \cite{ROS}.

\begin{proof}[Proof of Theorem \ref{teo2}]
Given $u\in X_\beta(B)$, denote $u_\lam(x)=u(\lam x)$ for $\lam>1$. For any $\beta\in [0,1]$ it is easy to check that
$$
\|u_\lam\|_\beta    \leq \lam^{-\gamma}\|u\|_\beta
$$
with $\gamma=\frac{n-sp}{p}$ when $\beta\in [0,1)$ and $\gamma=\frac{n-p}{p}$ when $\beta=1$.

Denote $f(x,t)=|x|^\alpha t^{q-1}$ and $F(x,u)=\int_0^u f(x,t)\,dt$ and let $u\in X_\beta(B)$ be a weak solution of \eqref{prob}. A straightforward computation shows  that $f$ is supercritical in the sense that
$$
\beta t f(x,t)> n F(x,t)+x\cdot F_x(x,t) \qquad \text{for all }t\in B \text{ and } t\neq 0
$$
whenever $q > (n+\alpha)/\gamma$, i.e., when
\begin{equation} \label{cond.noexist}
q>\frac{p(n+\alpha)}{n-sp} \text{ when } \beta\in (0,1], \qquad q>\frac{p(n+\alpha)}{n-p} \text{ when } \beta=1.
\end{equation}

If $u\in X_\beta(B)\cap W^{1,r}(B)\cap L^\infty(\Omega)$, by \cite[Proposition 1.4]{ROS} we have that $u\equiv 0$.

In particular, if $u\in X_{\beta,rad}(B)$ is a weak solution of \eqref{prob}, by \cite[Proposition 2.2]{DRV} we have that $u\in W^{1,p}(B)$; moreover,  by Corollary \ref{coro.bound} and Remark \ref{rem.bound} $u\in L^\infty(B)$. As a consequence,  by \cite[Proposition 1.4]{ROS} we obtain that $u\equiv 0$.
\end{proof}

\section{Stability of solutions}\label{sec.stability}

This section is devoted to prove our stability result for solutions of \eqref{prob} as $s\nearrow 1$.

We start with the following useful lemma.

\begin{lema} \label{lemma.aux}
Let $s_k\uparrow 1$ and $v_k\in W^{1,p}_0(B)$ be such that $\sup_k \|\nabla v_k\|_{L^p(\R^n)}^p<\infty$. Assume without loss of generality that $v_k\to v$ strongly in $L^p(B)$. Then, for every $u\in W^{1,p}_0(\Omega)$ we have that
$$
\langle \mathcal{L}_{s_k,p,\beta} u,v_k \rangle \to  \langle -\Delta_p u,v_k \rangle .
$$
\end{lema}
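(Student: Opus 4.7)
The plan is to reduce the problem, via a density argument, to the case of a smooth $u$, and then to exploit the pointwise/$L^{p'}$ convergence of the fractional $p$-Laplacian to the $p$-Laplacian as $s_k\to 1$. First I would rewrite
\begin{equation*}
\langle \mathcal{L}_{s_k,p,\beta} u, v_k\rangle - \langle -\Delta_p u, v_k\rangle = \beta\bigl[\langle (-\Delta_p)^{s_k} u, v_k\rangle - \langle -\Delta_p u, v_k\rangle\bigr],
\end{equation*}
so, discarding the trivial case $\beta=0$, it suffices to show that the bracket tends to $0$. The hypothesis already yields, via Poincar\'e, a uniform bound on $\|v_k\|_{L^p(B)}$.

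For the smooth step, I would fix $u^{\ve}\in C_c^\infty(B)$ with $\|u-u^{\ve}\|_{W^{1,p}_0(B)}<\ve$ by density. Since $u^{\ve}$ is smooth and compactly supported, $(-\Delta_p)^{s_k} u^{\ve}(x)$ is pointwise well defined and uniformly bounded in $x\in B$ and in $k$, and converges pointwise to $-\Delta_p u^{\ve}(x)$ as $s_k\to 1$ by a standard BBM-type argument. Dominated convergence then gives $(-\Delta_p)^{s_k} u^{\ve}\to -\Delta_p u^{\ve}$ in $L^{p'}(B)$, so H\"older yields
\begin{equation*}
\bigl|\langle (-\Delta_p)^{s_k} u^{\ve} - (-\Delta_p) u^{\ve}, v_k\rangle\bigr| \to 0 \quad \text{as } k\to\infty.
\end{equation*}

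For the approximation error, I would combine the elementary bound $\bigl||a|^{p-2}a - |b|^{p-2}b\bigr|\leq C_p(|a|+|b|)^{p-2}|a-b|$ (valid for $p\geq 2$, with an analogous expression for $1<p<2$) with H\"older's inequality inside the Gagliardo integral, to obtain
\begin{equation*}
\bigl|\langle (-\Delta_p)^{s_k} u - (-\Delta_p)^{s_k} u^{\ve}, v_k\rangle\bigr|\leq C\bigl([u]_{s_k,p}+[u^{\ve}]_{s_k,p}\bigr)^{p-2}\,[u-u^{\ve}]_{s_k,p}\,[v_k]_{s_k,p}.
\end{equation*}
The BBM theory provides the uniform bound $[w]_{s_k,p}\leq C\|\nabla w\|_{L^p(\R^n)}$ (for $s_k$ close to $1$ and $w\in W^{1,p}_0(B)$), which controls each Gagliardo seminorm on the right by the corresponding gradient norm. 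Hence the right-hand side is bounded by $C\ve$ uniformly in $k$, and an analogous but simpler H\"older estimate gives $\bigl|\langle -\Delta_p u -(-\Delta_p) u^{\ve}, v_k\rangle\bigr|\leq C\ve$.

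Combining the three estimates with the triangle inequality yields $\limsup_{k\to\infty}|\langle (-\Delta_p)^{s_k} u - (-\Delta_p) u, v_k\rangle|\leq C\ve$, and sending $\ve\to 0$ closes the proof. The main obstacle I anticipate is the verification of the uniform-in-$s_k$ BBM comparison $[w]_{s_k,p}\leq C\|\nabla w\|_{L^p(\R^n)}$ with constant independent of $s_k$ in a neighborhood of $1$; this relies on the precise normalization of $K(n,s)$. A minor secondary nuisance is the case split $p\geq 2$ versus $1<p<2$ in the algebraic bound on $||a|^{p-2}a-|b|^{p-2}b|$, which alters the H\"older exponents above.
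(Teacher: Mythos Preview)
Your approach is correct and takes a genuinely different route from the paper's proof. You argue by density plus pointwise convergence of the operator: approximate $u$ by $u^\ve\in C_c^\infty$, use that $(-\Delta_p)^{s_k}u^\ve\to -\Delta_p u^\ve$ in $L^{p'}(B)$, and control the approximation error via the algebraic inequality for $|a|^{p-2}a$ together with the uniform-in-$s$ bound $[w]_{s_k,p}\le C\|\nabla w\|_p$. The paper instead works entirely at the level of energies: it combines the BBM lower semicontinuity $\|\nabla(u+tv)\|_p^p\le\liminf_k[u+tv_k]_{s_k,p}^p$ with the exact limit $[u]_{s_k,p}^p\to\|\nabla u\|_p^p$, subtracts, divides by $t$, and sends $t\to 0$ (via \cite[Lemma 2.7]{fbs}) to extract the pairing; applying this to $-u$ gives the matching upper bound.

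What each approach buys: the paper's argument never leaves the seminorm level, so it only invokes the classical BBM statements for energies and needs no information about the pointwise operator. Your argument is more direct, but the step you label a ``standard BBM-type argument''---the pointwise (hence $L^{p'}$) convergence $(-\Delta_p)^{s_k}u^\ve\to-\Delta_p u^\ve$ for smooth $u^\ve$---is \emph{not} BBM, which concerns Gagliardo energies rather than the nonlinear singular integral itself. The claim is true, but its proof requires a second-order Taylor expansion of $u^\ve$ and careful cancellation in the principal value to see that $K(n,s_k)\sim c(1-s_k)$ exactly compensates the $1/(1-s_k)$ blow-up of the local part; for $1<p<2$ the map $t\mapsto|t|^{p-2}t$ is only H\"older, and the uniform $L^\infty$ bound near directions where $\nabla u^\ve(x)\cdot h=0$ needs extra care. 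So your outline goes through, but that step deserves either a precise citation or a self-contained computation rather than an appeal to BBM.
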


\begin{proof}
If we prove that for any $u\in W^{1,p}_0(B)$
\begin{equation} \label{a.probar}
\langle -\Delta_p u,v \rangle \leq \liminf_{k\to\infty} \langle \mathcal{L}_{s_k,p,\beta} u,v \rangle,
\end{equation}
then applying \eqref{a.probar} to $-u$ gives the reverse inequality and hence the result.
By a refinement of \cite[Section 3]{bbm} we have
\begin{equation} \label{relacc1}
 \|\nabla(u+tv)\|_{L^p(\R^n)}^p \leq \liminf_{k\to\infty}  [u+tv_k]_{s_k,p}^p
\end{equation}
Moreover, from \cite{bbm} it follows that
\begin{equation}\label{relacc2}
\lim_{k\to\infty}    [u]_{s_k,p}^p = \|\nabla u\|_{L^p(\R^n)}^p.
\end{equation}

Denote $I:= \|\nabla (u+tv)\|_{L^p(\R^n)}^p - \|\nabla u \|_{L^p(\R^n)}^p$. We write
$$
I= \beta I + (1-\beta) I.
$$
By using the lower semicontinuity of the $L^p$ norm, we have
$$
(1-\beta) I \leq (1-\beta)\liminf_{k\to\infty} \left(\|\nabla (u+tv_k)\|_{L^p(\R^n)}^p - \|\nabla u \|_{L^p(\R^n)}^p \right).
$$
From \eqref{relacc1} and \eqref{relacc2} we get
$$
\beta I \leq \beta \liminf_{k\to\infty}\left( [u+tv_k]_{s_k,p}^p - [u]_{s_k,p}^p  \right).
$$
Then, from the last three relations and the superaditivity property of the liminf we obtain
$$
I \leq  \liminf_{k\to\infty} \left(   (1-\beta) \left( \|\nabla (u+tv_k)\|_{L^p(\R^n)}^p - \|\nabla u \|_{L^p(\R^n)}^p \right)  +  \beta \left( [u+tv_k]_{s_k,p}^p - [u]_{s_k,p}^p  \right) \right).
$$

Then, from \cite[Lemma 2.7]{fbs} it is immediate that
\begin{align*}
\langle -\Delta_p u,v \rangle + o(1) &\leq  \liminf_{k\to\infty} \left( (1-\beta)\langle - \Delta_p u,v_k \rangle + \beta \langle (-\Delta_p)^{s_k} u,v_k\rangle \right) + o(1)\\
&= \liminf_{k\to\infty} \langle \mathcal{L}_{s_k,p,\beta} u,v_k \rangle + o(1)
\end{align*}
from where \eqref{a.probar} follows.
\end{proof}

\begin{proof}[Proof of Theorem \ref{teo3}]
The proof closely follows the one of \cite[Theorem 3.3]{fbs}. 
We start assuming that $u_k \to u \quad \textrm{in } L^{p}(B)$. Since the sequence $\{u_k\}_{k\in \mathbb{N}}$ is uniformly bounded in $X_{\beta,rad}(B)$, and hence, due to \cite{bbm}, we can infer that $u \in W^{1,p}_0 (B)$. Possibly passing to a subsequence, we can also suppose that $u_k \to u$ a.e. in $B$. \\
Now, we define the sequence of functions $\{\eta_k\}_{k\in \mathbb{N}}$ as follows:
\begin{equation}
\eta_k := \mathcal{L}_{s_k,p,\beta} u_k \in W^{-1,p'}(B).
\end{equation}
\noindent By equation \eqref{prob} and the fact that $\sup_{k \in \mathbb{N}} \, \|u_k\|^{p}_{\beta}<\infty$, we get that the sequence $\{\eta_k\}_{k\in \mathbb{N}}$ is actually bounded in $W^{-1,p'}(B)$, and therefore, possibly passing once again to a subsequence, we can infer the existence of a function $\eta \in W^{-1,p'}(B)$ such that
\begin{equation}
\eta_k \rightharpoonup \eta \quad \textrm{weakly in } W^{-1,p'}(B).
\end{equation}

Now, since the $u_k$'s are weak solutions of \eqref{prob}, and exploiting the appropriate convergences, we find that for every $v \in W^{1,p}_{0}(B)$ it holds that
\begin{equation}
\begin{aligned}
0 &=  \langle \mathcal{L}_{s_k,p,\beta}, v \rangle - \int_{B}|x|^{\alpha}u_{k}^{q-1}v \, dx \to \langle \eta, v\rangle - \int_{B}|x|^{\alpha}u^{q-1}v \, dx \quad \textrm{as } k \to +\infty.
\end{aligned}
\end{equation}
The monotonicity of both the $p$-Laplacian and the fractional $p$-Laplacian implies that
\begin{equation}
\begin{aligned}
0 &\leq \langle \mathcal{L}_{s_k,p,\beta} u_k, u_k -v \rangle - \langle\mathcal{L}_{s_k,p,\beta} v, u_k -v \rangle \\
&= \int_{B}|x|^{\alpha}u_k^{q-1}(u_k -v) \, dx - \langle \mathcal{L}_{s_k,p,\beta} v, u_k -v \rangle.
\end{aligned}
\end{equation}
Passing to the limit as $k\to +\infty$, and using Lemma \ref{lemma.aux}, we get
\begin{equation}
\begin{aligned}
0 &\leq \int_{B}|x|^{\alpha}u^{q-1}(u-v) \, dx - \langle -\Delta_p v, u-v \rangle \\
&= \langle \eta, u-v \rangle - \langle -\Delta_p v, u-v \rangle.
\end{aligned}
\end{equation}

Now, taking $v = u -tw$ for a given $w \in W^{1,p}_{0}(B)$ and with $t>0$, we obtain that
\begin{equation}
\begin{aligned}
0&\leq \langle \eta, tw \rangle - \langle -\Delta_p (u-tw), tw \rangle\\
&= \langle \eta, w \rangle - \langle -\Delta_p (u-tw), w \rangle\\
&\to \langle \eta, w \rangle - \langle -\Delta_p u, w \rangle \quad \textrm{as } t \to 0^+.
\end{aligned}
\end{equation}
This shows that $u \in W^{1,p}_{0}(B)$ is a weak solution of \eqref{prob} with $\beta=0$. 
\end{proof}

\section{The linear case: existence of non-radial ground states}\label{sec.nonradial}


The aim of this section is, following the ideas of \cite{SSW},  to prove that under suitable conditions on $\beta$ and $q$, there exists $\alpha^*>0$ such that, ground states of $\LL$ in the linear case are no radial provided $\alpha>\alpha^*$.

We recall the following relation (see for instance \cite[Lemma 4.2]{FBS2}).
\begin{lema} \label{lemma.bound}
Let $u\in W^{1,p}(\R^n)$, $1\leq p < \infty$. Then, for each $0<s<1$
$$
\iint_{\mathbb{R}^{2n}}\dfrac{|u(x)-u(y)|^p}{|x-y|^{n+2s}}\, dxdy \leq \frac{n\omega_n}{p} \left( \frac{1}{1-s} \|\nabla u\|_{L^p(\Omega)}^p + \frac{2^p}{s} \|u\|_{L^p(\Omega)}^p \right).
$$
In particular, if $u\in X_p(B)$, for some $c=c(n,s,p)$
$$
\iint_{\mathbb{R}^{2n}}\dfrac{|u(x)-u(y)|^p}{|x-y|^{n+2s}}\, dxdy \leq  c\|\nabla u\|_p.
$$
\end{lema}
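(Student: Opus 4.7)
The plan is to split the double integral over $\mathbb{R}^{2n}$ into the near-field region $\{|x-y|\leq 1\}$ and the far-field region $\{|x-y|>1\}$, and estimate each piece separately; the first is controlled by $\|\nabla u\|_p^p$ and the second by $\|u\|_p^p$. Throughout I would use the change of variables $h=y-x$ together with Fubini to reduce the double integral to a product of an $x$-integral over $\mathbb{R}^n$ and an $h$-integral over a region of $\mathbb{R}^n$.

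\textbf{Near field.} For $u\in C_c^\infty(\mathbb{R}^n)$ (afterwards extended to $W^{1,p}(\mathbb{R}^n)$ by density), the fundamental theorem of calculus gives
$$u(x+h)-u(x)=\int_0^1 \nabla u(x+th)\cdot h\,dt,$$
so that, by Jensen's inequality,
$$|u(x+h)-u(x)|^p\leq |h|^p\int_0^1|\nabla u(x+th)|^p\,dt.$$
Applying Fubini and translation invariance,
$$\int_{\mathbb{R}^n}|u(x+h)-u(x)|^p\,dx\leq |h|^p\,\|\nabla u\|_{L^p(\mathbb{R}^n)}^p.$$
Inserting the exponent $|x-y|^{-(n+ps)}$ and passing to polar coordinates, the near-field contribution is bounded by
$$\|\nabla u\|_p^p\int_{|h|\leq 1}|h|^{p-n-ps}\,dh=\|\nabla u\|_p^p\,n\omega_n\int_0^1 r^{p(1-s)-1}\,dr=\frac{n\omega_n}{p(1-s)}\,\|\nabla u\|_p^p,$$
which gives the first term on the right-hand side.

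\textbf{Far field.} Using the elementary convexity estimate $|u(x)-u(y)|^p\leq 2^{p-1}(|u(x)|^p+|u(y)|^p)$, symmetry between $x$ and $y$, and Fubini,
$$\iint_{|x-y|>1}\!\frac{|u(x)-u(y)|^p}{|x-y|^{n+ps}}\,dx\,dy\leq 2^p\|u\|_p^p\int_{|h|>1}\frac{dh}{|h|^{n+ps}}=\frac{2^p n\omega_n}{ps}\,\|u\|_p^p.$$
Adding the two contributions yields exactly the stated inequality. The main (mild) obstacle is the justification of the pointwise line-integral formula for general $u\in W^{1,p}(\mathbb{R}^n)$, which is handled by approximation in $W^{1,p}(\mathbb{R}^n)$ by smooth functions together with Fatou's lemma.

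\textbf{The ``in particular'' statement.} For $u\in X_p(B)$ the zero extension to $\mathbb{R}^n\setminus B$ lies in $W^{1,p}(\mathbb{R}^n)$, so the previous estimate applies. By the Poincar\'e inequality on the bounded domain $B$ we have $\|u\|_{L^p(B)}\leq C_P\|\nabla u\|_{L^p(B)}$, hence $\|u\|_p^p\leq C_P^p\|\nabla u\|_p^p$. Plugging this into the first inequality absorbs the $\|u\|_p^p$ term into the $\|\nabla u\|_p^p$ term with a constant $c=c(n,s,p)$ depending on $n$, $s$, $p$ (and on $B$ through $C_P$), which is exactly the content of the second display.
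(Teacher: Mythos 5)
Your proof is correct and is essentially the same argument as the one behind the paper's statement: the paper itself offers no proof, only a citation to \cite[Lemma 4.2]{FBS2}, whose proof is exactly this near-field/far-field splitting with the fundamental theorem of calculus plus Jensen on $\{|x-y|\le 1\}$ and the convexity bound $|u(x)-u(y)|^p\le 2^{p-1}(|u(x)|^p+|u(y)|^p)$ on $\{|x-y|>1\}$, followed by Poincar\'e for the $X_p(B)$ case. Note only that your constants (which do match the statement) correspond to reading the kernel exponent as $n+sp$ rather than the literal $n+2s$ of the display, and that your conclusion $c\|\nabla u\|_{L^p}^p$ (rather than $c\|\nabla u\|_p$) is the intended form, consistent with how the lemma is used in \eqref{poinca}.
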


In order to simplify our notation, in this section we redefine the Gagiardo seminorm as
\begin{equation}
[u]_{s,p}:= \left( C(n,s,p)\iint_{\mathbb{R}^{2n}}\dfrac{|u(x)-u(y)|^p}{|x-y|^{n+2s}}\, dx \, dy\right)^{1/p},
\end{equation}
where, in light of Lemma \ref{lemma.bound} the constant $C$ is chosen such that
\begin{equation} \label{poinca}
[u]_{s,p}\leq \|\nabla u\|_{L^p(\Omega)} 
\end{equation}
for all $u \in X_p(B)$.

Moreover,  for $\beta\in [0,1)$ we set
$$
\mathcal{L}_{s,\beta} u := (-\Delta) u + \beta (- \Delta)^s u.
$$

We say that $u\in X_2(B)$ is a \emph{ground state} of 
$$
\mathcal{L}_{s,\beta} u =|x|^\alpha u^{q-1}, \; u>0 \text{ in } B
$$
if $u$ is a minimizer of 
$$
\inf_{0\neq u \in X_2(B)}  R(u)
$$
where
$$
R(u)=\frac{Z(u)}{N(u)} = \frac{  \beta [u]_{s,2}^2 + \|\nabla u\|_{L^2(\Omega)}^2 }{ \left(\int_B |x|^\alpha |u|^q\,dx\right)^\frac{2}{q} }.  	
$$

\begin{prop} 
Let $\alpha>0$, $0\leq \beta \leq 1$,  $n\geq 3$ and $q>2+\beta$. Then any radial minimizer of $R$ satisfies that
\begin{equation} \label{propo3.1}
\beta [u]_{s,2}^2 + \|\nabla u\|_{L^2(B)}^2 \leq  \frac{(1+\beta)(n-1)}{q-2-\beta} \int_B \frac{u^2}{|x|^2} \,dx
\end{equation}

\end{prop}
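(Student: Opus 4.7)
I would extract the inequality \eqref{propo3.1} as a necessary condition coming from the second variation of the Rayleigh quotient $R$ along specific non-radial perturbations of the radial minimizer $u$. For $j=1,\dots,n$, set
\begin{equation*}
\varphi_j(x):=u(x)\,\frac{x_j}{|x|},
\end{equation*}
which belong to $X_2(B)$: indeed $\varphi_j\equiv 0$ outside $B$ and a direct computation (exploiting the radiality of $u$) yields
\begin{equation*}
\int_B|\nabla\varphi_j|^2\,dx=\tfrac{1}{n}\|\nabla u\|_{L^2(B)}^2+\tfrac{n-1}{n}\int_B\frac{u^2}{|x|^2}\,dx,
\end{equation*}
the right-hand side being finite by Hardy's inequality (this is where $n\geq 3$ enters). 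Summing over $j$ and using $\sum_j x_j^2/|x|^2=1$ gives
\begin{equation*}
\sum_{j=1}^n\|\nabla\varphi_j\|_{L^2(B)}^2=\|\nabla u\|_{L^2(B)}^2+(n-1)\int_B\frac{u^2}{|x|^2}\,dx.
\end{equation*}
Moreover, radiality of $u$ and oddness of $x_j/|x|$ in $x_j$ force $\int_B|x|^\alpha u^{q-1}\varphi_j\,dx=0$.

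\medskip

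Writing $Z(u):=\|\nabla u\|_{L^2(B)}^2+\beta[u]_{s,2}^2$ and $N(u):=M(u)^{2/q}$ with $M(u):=\int_B|x|^\alpha u^q\,dx$, so that $R=Z/N$, the standard second-variation formula at the critical point $u$ reduces $R''(u)[\varphi_j,\varphi_j]\geq 0$ to $Z''(u)[\varphi_j,\varphi_j]\geq (Z(u)/N(u))\,N''(u)[\varphi_j,\varphi_j]$. Since $Z$ is a positive quadratic form, $Z''(u)[\varphi_j,\varphi_j]=2(\|\nabla\varphi_j\|_{L^2(B)}^2+\beta[\varphi_j]_{s,2}^2)$, and the vanishing of $M'(u)[\varphi_j]$ kills the nonlinear contribution to $N''$, leaving
\begin{equation*}
N''(u)[\varphi_j,\varphi_j]=2(q-1)\,M(u)^{2/q-1}\int_B|x|^\alpha u^{q-2}\varphi_j^2\,dx.
\end{equation*}
Summing over $j$ and using the pointwise identity $\sum_j\varphi_j^2=u^2$, the right-hand side collapses to $2(q-1)N(u)$, so that after cancellation
\begin{equation*}
\|\nabla u\|_{L^2(B)}^2+(n-1)\int_B\frac{u^2}{|x|^2}\,dx+\beta\sum_{j=1}^n[\varphi_j]_{s,2}^2\geq (q-1)\bigl(\|\nabla u\|_{L^2(B)}^2+\beta[u]_{s,2}^2\bigr).
\end{equation*}

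\medskip

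The remaining and delicate step is to control the nonlocal sum $\sum_j[\varphi_j]_{s,2}^2$. Rather than directly unfolding the Gagliardo double integral associated with the product $u(x)\,x_j/|x|$ (which would require spherical-harmonic manipulations), I would simply invoke the Poincar\'e-type estimate of Lemma \ref{lemma.bound}, in the normalised form \eqref{poinca}, applied to each $\varphi_j\in X_2(B)$: this yields $[\varphi_j]_{s,2}^2\leq\|\nabla\varphi_j\|_{L^2(B)}^2$, hence
\begin{equation*}
\sum_{j=1}^n[\varphi_j]_{s,2}^2\leq\|\nabla u\|_{L^2(B)}^2+(n-1)\int_B\frac{u^2}{|x|^2}\,dx.
\end{equation*}
Substituting in the previous inequality and rearranging leads to
\begin{equation*}
(1+\beta)(n-1)\int_B\frac{u^2}{|x|^2}\,dx\geq (q-2-\beta)\|\nabla u\|_{L^2(B)}^2+(q-1)\beta[u]_{s,2}^2.
\end{equation*}
The assumption $q>2+\beta$ ensures $q-2-\beta>0$, and since $q-1\geq q-2-\beta$ one may replace the coefficient $(q-1)\beta$ in front of $[u]_{s,2}^2$ by the smaller $(q-2-\beta)\beta$, then divide by $q-2-\beta$ to conclude \eqref{propo3.1}. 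The main obstacle is not the algebra but the rigorous justification of the second variation inequality: one needs $R\in C^2$ near $u$ (guaranteed by $N(u)>0$) and that the $\varphi_j$ be genuinely admissible perturbations, both of which rely crucially on Hardy's inequality in dimension $n\geq 3$.
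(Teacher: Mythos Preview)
Your argument is correct and follows essentially the same route as the paper: a second-variation test of $R$ at the radial minimizer with perturbations of the form $u(r)f(\sigma)$, $\int_{S^{n-1}}f=0$, combined with the bound \eqref{poinca} to control the nonlocal energy of the perturbation. The only cosmetic difference is that you work with the concrete first spherical harmonics $\varphi_j=u\,x_j/|x|$ and sum over $j$ (exploiting $\sum_j\varphi_j^2=u^2$), whereas the paper keeps a generic zero-mean $f$ and then invokes $\inf_f\mathcal{S}_n(f)=n-1$; since the $x_j/|x|$ realise this infimum, the two computations coincide.
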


\begin{proof}
We assume that $N(u)=1$. Let $u\in X_2(B)$ be a minimizer of $R$ and define $g(t)= R(u+tv)$ for $t\geq0$ and $v\in X_2(B)$. Then $g''(0)\geq 0$ and $g'(0)=0$, which gives
$$
g''(0)=\frac{1}{N^2(u)}\left(\langle Z''(u)v,v\rangle N(u) - \langle N''(u) v,v\rangle Z(u) \right)
$$

An easy computation shows that
$$
\langle Z''(u)v,v\rangle =    2 \left(  \beta C \iint_{\R^n\times\R^n} \frac{ |v(x)-v(y)|^2}{|x-y|^{n+2s}}\,dxdy  +  \int_B |\nabla v|^2 \,dx \right)
$$

and that
$$
\langle N''(u)v,v\rangle = 2(2-q) \left( \int_B |x|^\alpha uv \,dx \right)^2 + 2(q-1) \int_B |x|^\alpha v^2\,dx.
$$

Then, $g''(0)\geq 0$ is equivalent to $I_1\leq I_2$ where
\begin{align*}
&I_1:=\left( (2-q) \left( \int_B |x|^\alpha uv \,dx \right)^2 + (q-1) \int_B |x|^\alpha |uv|^{2}\,dx \right) (   \beta  [u]_{s,2}^2 +  \|\nabla u\|_{L^2(B)}^2  ) \\
& I_2:=     \left( \beta C \iint_{\R^n\times\R^n} \frac{ |v(x)-v(y)|^2}{|x-y|^{n+2s}}\,dxdy  +\int_B  |\nabla v|^2 \,dx \right) 
\end{align*}

Observe that  by \eqref{poinca}
$$
I_2\leq  (1+\beta )      \|\nabla v\|_{L^2(B)}^2.
$$
 
Assume $u$ is radial and choose $v$ of the form $v=u(r)f(\sigma)$, where $f$ is a smooth function defined in $S^{n-1}$ with zero mean. Observe that $v\in X_2(B)$ for $n\geq  3$.

Since 
$$
|\nabla v|^2 = \left(\frac{\partial u}{\partial r}\right)^2 f^2 + \frac{u^2}{r^2} |\nabla_\sigma f|^2
$$
we have that
\begin{align*}
\frac{1}{1+\beta}I_2&\leq  \int_B |\nabla v|^2\,dx = \int_B |\nabla u(|x|)|^2 \,dx \int_{S^{n-1}} f^2\,dS_\sigma +   \int_B \frac{u(|x|)^2}{|x|^2} \,dx \int_{S^{n-1}} |\nabla_\sigma f|^2\,dS_\sigma\\
&\leq  \left(\beta   [u]_{s,2}^2 + \|\nabla u\|_{L^2(B)}^2 \right) \int_{S^{n-1}} f^2\,dS_\sigma +   \int_B \frac{u(|x|)^2}{|x|^2} \,dx \int_{S^{n-1}} |\nabla_\sigma f|^2\,dS_\sigma;
\end{align*}
since $f$ has zero mean and $N(u)=1$ we get
\begin{align*}
I_1&=\left( (2-q) \left( \int_B |x|^\alpha u(|x|)\,dx \int_{S^{n-1}} f\,dS_\sigma  \right)^2 + (q-1) \int_B |x|^\alpha  u(|x|)^2\,dx  \int_{S^{n-1}} f^2 \,dS_\sigma \right) Z(u)\\
&=(q-1) (  \beta   [u]_{s,2}^2 +  \|\nabla u\|_{L^2(B)}^2) \int_{S^{n-1}} f^2 \,dS_\sigma.
\end{align*}
Then $I_1\leq I_2$ means that
$$
(q-2-\beta) (   \beta  [u]_{s,2}^2 +  \|\nabla u\|_{L^2(B)}^2)  \int_{S^{n-1}} f^2  \,dS_\sigma  \leq   (1+\beta) \int_B \frac{u(|x|)^2}{|x|^2} \,dx \int_{S^{n-1}} |\nabla_\sigma f|^2\,dS_\sigma
$$
that is
$$
\frac{q-2-\beta}{1+\beta} (   \beta   [u]_{s,2}^2 + \|\nabla u\|_{L^2(B)}^2 )\leq  \mathcal{S}_n(f) \int_B \frac{u^2}{|x|^2} \,dx
$$
where 
$$
\mathcal{S}_n(f)=\frac{\int_{S^{n-1}} |\nabla_\sigma f|^2\,dS_\sigma}{\int_{S^{n-1}} f^2  \,dS_\sigma }.
$$
Since the infimum of $\mathcal{S}_n(f)$ over all $f\in H^1(S^{n-1})$ with $\int_{S^{n-1}}=0$ equals to $n-1$, we get
$$
\beta[u]_{s,2}^2 + \|\nabla u\|_{L^2(B)}^2 \leq  \frac{(1+\beta)(n-1)}{q-2-\beta} \int_B \frac{u^2}{|x|^2} \,dx
$$
and the proof concludes.
\end{proof}

We are now ready to prove Theorem \ref{teo4}.

\begin{proof}[Proof of Theorem \ref{teo4}]
Given $0<r\leq 1$, let $u_\alpha(r)\in X_2(B_r)$ be the ground state  of
$$
-\Delta u  = |x|^\alpha u^{q-1},\; u>0 \text{ in } B_r
$$
normalized such that $Z(u_\alpha(r))= 1$. When $r=1$ we just write   $u_\alpha$.

\emph{Step 1}. It holds that
\begin{equation} \label{step1}
\int_{B_r} |\nabla u_\alpha|^2 \,dx \to 0 \quad \text{ as } \alpha\to\infty.
\end{equation}
Indeed, from the equation for $u_\alpha$, as in \cite[Equation 16]{SSW}, we get
$$
\int_{B_r} |\nabla u_\alpha|^2\,dx \leq \int_{B_r} |\nabla u_\alpha|^2\,dx + \beta[u_\alpha]_{s,2}^2 \leq \frac{\int_{B_r} |x|^\alpha u_\alpha^q \,dx}{\int_{B} |x|^\alpha u_\alpha^q \,dx}.
$$
Now, since by \eqref{poinca}, $\|u_\alpha \|_{L^2(B_r)}^2 \leq \|u_\alpha \|_{L^2(B_r)}^2 + \beta[u_\alpha]_{s,2}^2 \leq (1+\beta)\|u_\alpha \|_{L^2(B_r)}^2$, the claim follows similarly as in the proof of \cite[Lemma 3.1]{SSW}.

\emph{Step 2}. Consider the ground state $u_\alpha$ in $B$. Let us see that
\begin{equation} \label{step2}
\int_B \frac{u_\alpha^2}{|x|^2}\,dx \to 0\quad \text{ as } \alpha\to\infty.
\end{equation}

Indeed, as in \cite[Proposition 3.1, Step 1]{SSW}, observe that there exists $0<r<1$ independent of $\alpha$ such that $u_\alpha(r)<\ve$.

 We decompose $B$ as 
\begin{align*}
\int_B \frac{u_\alpha^2}{|x|^2} \,dx&= 
\int_{B_r} \frac{u_\alpha^2}{|x|^2} \,dx + 
\int_{A_r} \frac{u_\alpha^2}{|x|^2} \,dx\\
&\leq 
2\int_{B_r} \frac{\tilde u_\alpha^2}{|x|^2} \,dx + 2\int_{B_r} \frac{u_\alpha(r)^2}{|x|^2} \,dx + 
\int_{A_r} \frac{u_\alpha^2}{|x|^2} \,dx 
\end{align*}
where $A_r=B\setminus B_r$,  $\tilde u_\alpha := u_\alpha - u_\alpha(r)$ and $\tilde u_\alpha \in X_2(B_r)$.

Observe that, as in \cite[Equation 13]{SSW}, from Hardy's inequality and \eqref{step1} we get
$$
\int_{B_r} \frac{\tilde u_\alpha^2}{|x|^2}\,dx  \leq \frac{4}{(n-2)^2}\int_{B_r} |\nabla u_\alpha|^2\,dx   \to 0 \quad \text{ as }\alpha\to\infty
$$
and, due to the election of $r$, 
$$
\int_{B_r} \frac{u_\alpha(r)^2}{|x|^2}\,dx \leq C\ve^2 \quad \text{ as } \alpha\to\infty.
$$
For the third term, observe that \eqref{step1} gives in particular that
$$
u_\alpha(r) \cd 0 \quad \text{ weakly in } X_2 \quad \text{ as } \alpha\to \infty.
$$
which, together with Rellich-Kondrakov theorem gives
$$
\int_{A_r} \frac{u_\alpha^2}{|x|^2}\,dx \to 0 \quad \text{ as } \alpha\to \infty.
$$
Mixing up the last expressions we obtain \eqref{step2}.

\medskip

\emph{Step 3}. Finally, from  \eqref{propo3.1} and \eqref{step2}  the result follows.
\end{proof}

\appendix
\section{Mountain pass lemma}\label{sec.app}

\begin{defn}
We say that the functional $\J$ satisfies the \emph{Palais-Smale compactness condition} if each sequence $\{u_n\}_{n\in\N}\subset \Xrad$ such that
\begin{itemize}
\item[(i)] $\{\J(u_n)\}_{n\in\N}$ is bounded, and
\item[(ii)] $\J'(u_n)\to 0$ in $\Xrad$
\end{itemize}
is precompact in $\Xrad$.
\end{defn}

We state the \emph{mountain-pass theorem} due to Ambrossetti  and Rabinowitz \cite{AR}.
\begin{prop} \label{mountain}
Let $E$ be a Banach space and let $\J\in C^1(E,\R)$ satisfy the Palais-Smale condition. Suppose that
\begin{itemize}
\item[(i)] $\J(0)=0$ and $\J(e)=0$ for some $e\neq 0$ in $E$;
\item[(ii)] there exists $\rho\in (0,\|e\|)$, $\sigma>0$ such that $\J\geq \sigma$ in $S_\rho=\{u\in E \colon \|u\|=\rho\}$.
\end{itemize}
Then $\J$ has a positive critical value
$$
c=\inf_{h\in \Gamma} \max_{t\in [0,1]} \J(h(t)) \geq \sigma>0
$$
where
$$
\Gamma=\{h\in C([0,1],E)\colon h(0)=0, h(1)=e\}.
$$
\end{prop}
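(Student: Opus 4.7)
\medskip

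The plan is to follow the classical deformation argument of Ambrosetti--Rabinowitz. First I would establish the lower bound $c\geq \sigma$ by a topological/connectedness argument: every continuous path $h\in\Gamma$ joining $0$ to $e$ must intersect the sphere $S_\rho$, since the real-valued map $t\mapsto \|h(t)\|$ is continuous with $\|h(0)\|=0<\rho<\|e\|=\|h(1)\|$, so by the intermediate value theorem there exists $t_0\in(0,1)$ with $\|h(t_0)\|=\rho$. Hypothesis (ii) then gives $\J(h(t_0))\geq \sigma$, whence $\max_{t\in[0,1]}\J(h(t))\geq \sigma$ for every admissible $h$, and taking the infimum yields $c\geq \sigma>0$.

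Next, the core of the proof is to show that $c$ is a critical value of $\J$. I would argue by contradiction: assume $c$ is a regular value. The key tool is the \emph{Deformation Lemma}, which, under the Palais--Smale condition, asserts that for any $\bar\ve>0$ one can find $\ve\in(0,\bar\ve)$ and a continuous one-parameter family of homeomorphisms $\eta\colon [0,1]\times E\to E$ such that
\begin{itemize}
\item[(a)] $\eta(0,u)=u$ for all $u\in E$;
\item[(b)] $\eta(t,u)=u$ whenever $|\J(u)-c|\geq \bar\ve$;
\item[(c)] $\eta(1,\{\J\leq c+\ve\})\subset \{\J\leq c-\ve\}$.
\end{itemize}
The construction of $\eta$ uses a pseudo-gradient vector field for $\J$ (Palais), localized by a Urysohn cut-off between the sublevel set $\{c-\bar\ve\leq \J\leq c+\bar\ve\}$ and its complement; the Palais--Smale condition guarantees that $\|\J'\|$ is bounded away from $0$ on the relevant annular region, so the associated ODE flow is well defined on $[0,1]$.

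Once the deformation is available, I would use it to push down a nearly optimal path. Choose $\bar\ve\in(0,\sigma/2)$ small enough that $\bar\ve<c/2$, and fix $\ve\in(0,\bar\ve)$ as produced by the lemma. By definition of the infimum, there exists $h\in\Gamma$ with $\max_{t\in[0,1]}\J(h(t))\leq c+\ve$. Set $\tilde h(t):=\eta(1,h(t))$. Property~(b) combined with $\J(0)=\J(e)=0<c-\bar\ve$ (recall $c\geq\sigma>2\bar\ve$) ensures $\tilde h(0)=0$ and $\tilde h(1)=e$, so $\tilde h\in\Gamma$. Property~(c) then gives $\max_{t\in[0,1]}\J(\tilde h(t))\leq c-\ve$, contradicting the definition of $c$ as the infimum. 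Hence $c$ must be a critical value.

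The main obstacle, and the genuinely technical step, is the construction of the deformation $\eta$ via a pseudo-gradient vector field; this is where the Palais--Smale hypothesis is essential, since it provides the a priori estimate $\inf\{\|\J'(u)\|_{E^*}:u\in \J^{-1}([c-\bar\ve,c+\bar\ve])\}>0$ needed to integrate the flow up to time $1$ and obtain a genuine descent of $2\ve$ in the functional value. The remaining pieces (the connectedness argument for $c\geq\sigma$ and the contradiction via a near-optimal path) are comparatively routine once the deformation is in place.
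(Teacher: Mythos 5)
Your proposal is correct: it is the classical Ambrosetti--Rabinowitz argument (lower bound $c\geq\sigma$ via the intermediate value theorem on $t\mapsto\|h(t)\|$, then a contradiction using the quantitative deformation lemma built from a pseudo-gradient flow, with the Palais--Smale condition supplying the lower bound on $\|\J'\|$ near level $c$). The paper does not prove this proposition at all --- it is stated in the appendix as a known result with a citation to Ambrosetti--Rabinowitz \cite{AR} --- and your argument is exactly the standard proof from that reference, so there is nothing further to compare.
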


\begin{defn} \label{def.s}
The functional $\J$ defined on $E$ satisfies the \emph{$({\bf S})$-property} if $\{u_n\}_{n\in\N}$ is a sequence in $E$ such that $u_n\rightharpoonup u$ weakly in $E$ and $\langle \J(u_n),u_n-u\rangle\to 0$, then $u_n\to u$ strongly in $E$.
\end{defn}

The following result characterizes the \emph{$({\bf S})$-property}. See \cite[Proposition 1.3]{PAO}.
 
\begin{prop} \label{prop-s}
Let $E$ be a uniformly convex Banach space and let $A_p\in C^1(E,\R)$ be such that
\begin{itemize}
\item[(i)] $\langle A_p(u),v \rangle\leq r \|u\|_E^{p-1}\|v\|_E$
\item[(ii)] $\langle A_p(u),u\rangle = r \|u\|_E^p$
\end{itemize}
for some $r>0$, for all $u,v\in E$. Then $A_p$ satisfies the \emph{$({\bf S})$-property}.
\end{prop}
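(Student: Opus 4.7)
The plan is to derive the norm convergence $\|u_n\|_E \to \|u\|_E$ from the conditions (i) and (ii), and then invoke the Radon--Riesz (Kadec--Klee) property of uniformly convex Banach spaces to upgrade weak convergence to strong convergence.

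First, applying (ii) to each $u_n$ gives
\begin{equation*}
\langle A_p(u_n), u_n\rangle = r\|u_n\|_E^p,
\end{equation*}
so, using the decomposition $\langle A_p(u_n), u_n-u\rangle = \langle A_p(u_n), u_n\rangle - \langle A_p(u_n), u\rangle$, the hypothesis $\langle A_p(u_n), u_n-u\rangle \to 0$ can be rewritten as
\begin{equation*}
\langle A_p(u_n), u\rangle = r\|u_n\|_E^p + o(1).
\end{equation*}
Specializing (i) with $v=u$ yields the upper bound $\langle A_p(u_n), u\rangle \leq r\|u_n\|_E^{p-1}\|u\|_E$, and combining the two displays produces
\begin{equation*}
r\|u_n\|_E^p \leq r\|u_n\|_E^{p-1}\|u\|_E + o(1).
\end{equation*}

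Since weakly convergent sequences are bounded, $\{u_n\}$ is bounded in $E$. Along any subsequence for which $\|u_n\|_E \to \ell \in [0,\infty)$, the previous inequality yields $\ell^p \leq \ell^{p-1}\|u\|_E$, hence $\ell \leq \|u\|_E$ (the trivial case $\ell=0$ giving the bound directly). The weak lower semicontinuity of the norm provides the reverse inequality $\|u\|_E \leq \liminf_n \|u_n\|_E$, so $\ell = \|u\|_E$. Since every convergent subsequence of $\{\|u_n\|_E\}$ has the same limit, the entire sequence converges: $\|u_n\|_E \to \|u\|_E$. Finally, $E$ being uniformly convex, the Radon--Riesz property forces $u_n \to u$ strongly, which is exactly the $({\bf S})$-property.

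The main conceptual point, rather than a computational obstacle, is that (i) supplies only an \emph{upper} bound on the duality pairing while (ii) is an \emph{equality} on the diagonal; it is precisely this asymmetry that upgrades the H\"older-type inequality into a sharp norm identity in the limit. No continuity or differentiability property of $A_p$ is invoked beyond the two stated conditions, which is consistent with the fact that the proposition is a structural statement about dualitylike operators and is then applied in the paper to $\LL$ via the bound in Remark \ref{cumple.s.prop}.
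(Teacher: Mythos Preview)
Your argument is correct: from (ii) and the hypothesis you extract $r\|u_n\|_E^p \leq r\|u_n\|_E^{p-1}\|u\|_E + o(1)$ via (i), deduce $\limsup_n \|u_n\|_E \leq \|u\|_E$, match it against weak lower semicontinuity to obtain $\|u_n\|_E \to \|u\|_E$, and finish with the Radon--Riesz property of uniformly convex spaces. Note, however, that the paper does not supply its own proof of this proposition; it simply cites \cite[Proposition~1.3]{PAO}, so there is nothing to compare against beyond observing that your argument is precisely the standard one that reference records.
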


\end{document}